\theoremstyle{definition}{
	\newtheorem{Def}{{\rm Definition}}
	\newtheorem{Ex}{{\rm Example}}
	
	\newtheorem{Prob}{{\rm Problem}}
}
\theoremstyle{plain}
{
	
	\newtheorem{Prop}{Proposition}
	\newtheorem{Thm}{Theorem}
	\newtheorem{MainThm}{Main Theorem}

}
\begin{document}
	\title[Decomposing manifolds into submanifolds via specific fold maps]{Decompositions of manifolds into submanifolds compatible with specific fold maps}
	\author{Naoki Kitazawa}
	\keywords{Special generic maps. Round fold maps. Decompositions of manifolds. \\
		\indent {\it \textup{2020} Mathematics Subject Classification}: Primary~57R45. Secondary~57R19.}
	\address{Institute of Mathematics for Industry, Kyushu University, 744 Motooka, Nishi-ku Fukuoka 819-0395, Japan\\
		TEL (Office): +81-92-802-4402 \\
		FAX (Office): +81-92-802-4405 \\
	}
	\email{n-kitazawa@imi.kyushu-u.ac.jp}
	\urladdr{https://naokikitazawa.github.io/NaokiKitazawa.html}
	
	\begin{abstract}
		We present new explicit decompositions of manifolds via so-called {\it fold} maps into lower dimensional spaces. Fold maps form a nice class of so-called {\it generic} maps, generalizing Morse functions naturally.
		
	    To understand the topologies and the differentibale structures of manifolds globally, decomposing manifolds are important and this presents interesting topics and problems on geometry of manifolds. The notion of a {\it Heegaard splitting} of a $3$-dimensional closed and connected manifold presents a pioneering study. A $3$-dimensional closed and connected manifold is always decomposed into two copies of a so-called $3$-dimensional {\it handlebody} via a so-called {\it Heegaard surface}, which is a closed and connected surface.
	     Heegaard splitiings are generalized as {\it multisections} of smooth or PL manifolds in the 2010s. As a way of understanding, these decompositions are understood via Morse functions and general generic smooth maps whose codimensions are negative.


		
	\end{abstract}
	
	
	\maketitle
	\section{Introduction.}
	\label{sec:1}
	
	Decomposing manifolds suitably gives natural and important tools in understanding global algebraic topological or differential topological properties of manifolds. A $3$-dimensional closed and connected manifold is always decomposed into so-called two copies of a so-called $3$-dimensional {\it $1$-handlebody} via a closed and connected surface, which is a so-called {\it Heegaard surface}. For related theory on $3$-dimensional manifolds, see \cite{hempel}.
	
	Such decompositions are generalized to ({\it generalized}) {\it multisections} of smooth or PL manifolds, for example. \cite{gaykirby} is a pioneering study for $4$-dimensional cases. \cite{rubinsteintillmann1,rubinsteintillmann2} study generalized versions, introducing the notion of a ({\it generalized}) {\it multisection}. \cite{lambertcolemiller} studies the $5$-dimensional case explicitly related to these studies.
	 
		They are in some situations studied and understood via Morse functions and more general so-called {\it generic} smooth maps.  "{\it Generic}" means general in the situation we discuss problems. Morse functions are simplest generic maps. Fundamental or some advanced theory of generic smooth maps is systematically presented in \cite{golubitskyguillemin} for example. 
		
		In Heegaard splittings, classically, Morse functions are fundamental tools. There always exists a Morse function compatible with a Heegaard splitting: a so-called {\it twisted double} of a Morse function the preimage of whose minimum or maximum is a Heegaard surface.
		For studies via generic smooth maps into the plane, see \cite{johnson, kobayashisaeki, takao} for example. 
		As related to classifications of Heegaard splittings, they mainly concentrate on numerical difference or some distance of two Heegaard splittings. They consider the product maps of two Morse functions compatible with the given two Heegaard splittings. 
		For the $4$-dimensional case, \cite{gaykirby} uses generic smooth maps into the plane. They call the maps {\it $2$-Morse functions}. \cite{baykursaeki1,baykursaeki2} are also related pioneering studies. These studies deform generic smooth maps into the plane by several local operations and obtain generic ones compatible with the decompositions. 
		
		It is natural and difficult to find nice explicit classes of such decompositions, especially in the case where the dimensions of the manifolds are higher. It is also natural and challenging to construct nice examples for the classes. This is closely related to difficulty of construction of explicit smooth maps and manifolds whereas knowing the existence is in some senses not so difficult as \cite{eliashberg1,eliashberg2} show by Eliashberg's celebrating existence theory of {\it fold} maps using methods from some theory of existence of solutions to differential equations. {\it Fold} maps are higher dimensional versions of Morse functions and presented later.
		
		Our paper concentrates on the following related problems. Some answers are in \cite{kitazawa9} and we give additional answers. 
		
		\begin{Prob}
			\label{prob:1}
		Can we define nice classes of decompositions. Especially, by using suitable classes of smooth maps whose codimensions are negative.
		\end{Prob}

    	\begin{Prob}
    		\label{prob:2}
    	Can we construct nice examples belonging to classes of Problem \ref{prob:1}.
    \end{Prob}
    Our study considers {\it fold} maps, which are simplest higher dimensional generalizations of Morse functions and which are locally represented as projections or the product maps of Morse functions and the identity maps on disks. This is also generic.
    
     As an explicit class, we mainly consider {\it special generic} maps, related pioneering studies on which are in \cite{burletderham,furuyaporto,saeki1} for example. This class contains Morse functions on spheres, playing important roles in Reeb's theorem, and canonical projections of spheres naturally embedded in Euclidean spaces, or {\it unit spheres}. This class is considered in \cite{kitazawa8} and we have given several answers to Problems \ref{prob:1} and \ref{prob:2} there. There we introduce {\it smooth} or {\it PL} {\it near multisections} of smooth or PL manifolds as a natural class of decompositions and give several examples.
      We give additional answer, as Main Theorem \ref{mthm:1} states in the following. We leave rigorous definitions of several undefined notions later. 
      Here we only note some. A {\it unit disk} is the disk bounded by the unit sphere in the Euclidean space we consider. A {\it linear} bundle whose fiber is a unit sphere means a bundle whose structure group consists of linear transformations. It is {\it reduced by degree $k^{\prime}$} if the degree of the structure group is reduced to a group of linear transformations whose degree drops by $k^{\prime}>0$ from the so-called {\it orthgonal group} whose {\it degree} is the sum of the dimension of the sphere of the fiber and $1$. 
     {\it Smooth} or {\it PL} {\it admissible} decompositions and as specific cases, {\it GN}, {\it GANM}, and {\it DGNANM} {\it decompositions} are defined rigorously first    
      in our paper as nice decompositions of smooth or PL manifolds. {\it Disk decompositions} are decompositions into copies of unit disks and also multisections.
      In some of studies related to multisections and similar decompositions, decompositions belonging to the classes appear explicitly.     
    	\begin{MainThm}[Theorem \ref{thm:6}]
    		\label{mthm:1}
    		Let $M_0$ be a smooth closed and connected manifold of dimension $m_0>0$ admitting an {\rm admissible smooth (PL) decomposition} of {\rm degree $k$} {\rm defined by a family $\{M_{0,j}\}_{j=1}^{k_0}$ of submanifolds}. 
    Let $M$ be the total space of a linear bundle over $M_0$ whose fiber is the $k$-dimensional unit sphere and which is reduced by degree $k^{\prime}$ with integers $k>0$ and $k^{\prime}>0$ satisfying $k-k^{\prime}>0$.
    
    Then, $M$ admits an admissible smooth {\rm (}resp. PL{\rm )} decomposition of degree $2k_0$.
    In the case where the given decomposition is a {\it GN} decomposition, the resulting decomposition is {\it GANM}. In the case where the given decomposition is {\it DGNANM}, so is the resulting one. In the case where the given decomposition is a multisection with the structure group of the given linear bundle being reduced to a so-called {\rm rotation group}, the resulting decomposition is also a multisection. In the case where the given decomposition is a{\it disk decomposition}, so is the resulting decomposition.
    		
    \end{MainThm}

     Note that the image of a special generic map on a closed manifold is a compact manifold smoothly immersed into the Euclidean space via codimension $0$ immersion. We also present closely related other new results and observations.
     

     We also consider {\it round} fold maps, introduced by the author in \cite{kitazawa0.1,kitazawa0.2,kitazawa0.3} and studied in \cite{kitazawa0.4,kitazawasaeki1} and also in \cite{kitazawa0.5,kitazawasaeki2}. Using this class is a new work in our paper. A {\it round} fold map is a Morse function obtained by gluing two copies of a Morse function on a compact manifold in a natural way as in Heegaard splittings or a fold map the image of whose singular set is embedded concentrically. Main Theorem \ref{mthm:2} gives a part of our new results. A round fold map {\it having a globally trivial monodormy} means a round fold map which is a function or one obtained by gluing the product map of a Morse function on a compact manifold, called an {\it axis manifold} and the identity map on a unit sphere and the projection on the product of the unit disk and a closed smooth manifold onto the unit disk. Such undefined terminologies and notions are rigorously presented later.
		\begin{MainThm}[Some of Theorem \ref{thm:7}]
		\label{mthm:2}
		Let $m>n \geq 1$ be integers. Let $M_L$ be an {\rm (}$m-n+1${\rm )}-dimensional smooth, compact and connected manifold.
	Assume that there exists an admissible smooth {\rm (}PL{\rm )} decomposition defined by the submanifolds $\{M_{L,j}\}_{j=1}^k$. Then a manifold $M$ admitting a round fold map into ${\mathbb{R}}^n$ having a globally trivial monodromy and an axis manifold diffeomorphic {\rm (}resp. PL homeomorphic{\rm )} to $M_L$ admits an admissible smooth {\rm (}resp. PL{\rm )} decomposition of degree $2k$. In addition in the case where the given admissible decomposition defined by the submanifolds $\{M_{L,j}\}_{j=1}^k$ is a disk decomposition, the resulting admissible decomposition of degree $2k$ is also a disk decomposition. In the case where the given admissible decomposition defined by the submanifolds $\{M_{L,j}\}_{j=1}^k$ is a multisection, the resulting admissible decomposition of degree $2k$ is a multisection.
	In addition, in the case where the given admissible decomposition is DGNANM, it is also DGNANM {\rm outside an} {\rm (}$m-n${\rm )}{\rm-dimensional smooth} {\rm (}resp. {\rm PL} {\rm )} {\rm compact submanifold with no boundary}.
	\end{MainThm}
	Other than Main Theorems, we present related new results and observations such as Theorem \ref{thm:8} and Example \ref{ex:6}. The organization is as follows. The second section reviews important facts on special generic maps and round fold maps. The third section is for decompositions of manifolds and several important classes of them. Some are presented in \cite{kitazawa9} and some are new and a part of our new results. The fourth section is for the so-called {\it Lusternik-Schnirelmann category number} of a topological space, systematically presented in \cite{cornealuptonopreatanre}. It is a homotopy invariant for topological spaces, defined as the minimal number among the numbers of subspaces whose inclusions into the space are null-homotopic and which cover the space. Decompositions of disks give such covers whereas they may not give the minimal numbers. This is closely related to some of our Main Theorems. The fifth section is for Main Theorems. The sixth section presents conclusion with future problems.\\
	\ \\
	{\bf Conflict of Interest.} \\
	The author is a member of the project JSPS KAKENHI Grant Number JP22K18267 "Visualizing twists in data through monodromy" (Principal Investigator: Osamu Saeki). The present study is due to this project. \\
	\ \\
	{\bf Data availability.} \\
	Data supporting our present study essentially are all included in our paper.
	
	\section{Fundamental properties and existing studies on special generic maps and the manifolds.}
	The $k$-dimensional Eucliedan space, which is a simplest smooth manifold and a Riemannian manifold equipped with the so-called standard Euclidean metric, is denoted by ${\mathbb{R}}^k$ for $k \geq 1$. Put $\mathbb{R}:={\mathbb{R}}^1$.
    For $x \in {\mathbb{R}}^k$ $||x|| \geq 0$ denotes the distance to the origin $0 \in {\mathbb{R}}^k$ under the metric induced by the standard Euclidean metric. $S^k:=\{x \in {\mathbb{R}}^{k+1} \mid ||x||=1 \}$ is the {\it $k$-dimensional unit sphere} for $k \geq 0$. It is a $k$-dimensional smooth closed submanifold of ${\mathbb{R}}^{k+1}$ and has no boundary. $S^k$ is connected for $k \geq 1$ and the two-point set which is discrete for $k=0$. $D^k:=\{x \in {\mathbb{R}}^{k} \mid ||x|| \leq 1 \}$ is the {\it $k$-dimensional unit disk} for $k \geq 1$. It is a $k$-dimensional smooth compact and connected submanifold of ${\mathbb{R}}^{k+1}$ whose boundary is the unit sphere $S^{k-1}$.

	For a topological space $X$ homeomorphic to a cell complex whose maximal dimension is finite, $\dim X$ denotes the dimension, which is uniquely defined.
	
	(Topological) manifolds are known to have the structures of CW complexes. Every smooth manifold is canonically regarded as a polyhedron. This is called a {\it PL manifold}. This canonical PL structure is uniquely defined.
	
	It is also well-known that a topological manifold whose dimension is at most $3$ has a PL structure uniquely. Note that this does not have PL structures which are not the PL manifolds. Furthermore, the manifold is uniquely regarded as a smooth manifold. This is on so-called hauptvermutung. See \cite{moise} for example.
	
	For general expositions on the PL category and the piecewise smooth category, which are known to be equivalent category, especially on PL topological theory, see \cite{hudson} for example. We omit rigorous expositions on fundamental notions, terminologies and theorems on the theory.
	
	For a differentiable map $c:X \rightarrow Y$ between differentiable manifolds, $p \in X$ is a {\it singular} point of $c$ if the rank of the differential ${dc}_p$ at $p$ is smaller than $\min \{\dim X,\dim Y\}$. $c(p)$ is called a {\it singular value} of $c$. $S(c)$ denotes the set of all singular points of $c$ and is called the {\it singular set} of $c$.
	
	A {\it diffeomorphism} between smooth manifolds means a smooth map with no singular points being also a homeomorphism. A {\it diffeomorphism on a smooth manifold} means a diffeomorphism from the manifold onto itself. Two manifolds are {\it diffeomorphic} if and only if there exists a diffeomorphism between the two manifolds. This of course gives an equivalence relation on the class of all smooth manifolds with the corners being eliminated. We can define {\it PL homeomorphic manifolds} via piecewise smooth homeomorphisms similarly.
	 
	In our paper, we use the notion of a corner of a manifold and we can eliminate or create corners depending on the situations. It is well-known that smooth manifolds with corners are smoothed by eliminating the corners in a canonical way. It is also well-known that any pair of such manifolds obtained from a fixed smooth manifold in this way is always diffeomorphic.  
	\begin{Def}
		\label{def:1}
		A smooth map $c:X \rightarrow Y$ between two smooth manifolds with no boundaries are said to be a {\it fold} map if at each singular point $p \in X$ there exists suitable local coordinates around $p$ and an integer $0 \leq i(p) \leq \frac{\dim X-\dim Y+1}{2}$ and $c$ is represented as $(x_1,\cdots,x_{\dim X}) \rightarrow (x_1,\cdots,x_{\dim Y-1},{\Sigma}_{j=1}^{\dim X-\dim Y-i(p)+1} {x_{\dim Y+j-1}}^2-{\Sigma}_{j=1}^{i(p)} {x_{\dim X-i(p)+j}}^2)$.
	\end{Def}
	It is a kind of fundamental exercises on smooth manifolds, Morse functions, and singularity theory of smooth maps, to see that the canonical projection of a unit sphere, defined as a map mapping $(x_1,x_2) \in S^k \subset {\mathbb{R}}^{k+1}={\mathbb{R}}^{k_1} \times {\mathbb{R}}^{k_2}$ into $x_1 \in {\mathbb{R}}^{k_1}$ where $k \geq 2$, $k_1,k_2 \geq 1$ and $k=k_1+k_2$. 
	Since the 1990s, manifolds admitting special generic maps have been studied by Saeki and Sakuma as \cite{saeki1,saeki2,saekisakuma1,saekisakuma2}, followed by \cite{nishioka,wrazidlo1,wrazidlo2,wrazidlo3}.
	They have revealed restrictions on the differentiable structures of the homotopy spheres etc. and the homology groups. As a pioneer, the author has studied the cohomology rings of the manifolds mainly in \cite{kitazawa1,kitazawa2,kitazawa3,kitazawa4,kitazawa5,kitazawa6,kitazawa7,kitazawa9} for example.

	The {\it diffeomorphism group} of a smooth manifold is the group of all diffeomorphisms on it topologized with the {\it Whitney $C^{\infty}$ topology}. This topology is one of fundamental topologies of spaces of smooth maps between smooth manifolds. See \cite{golubitskyguillemin}.
	
	A {\it smooth} bundle means a bundle with a fiber being a smooth manifold and the structure group defined as the diffeomorphism group of the fiber. A {\it linear} bundle means a bundle whose fiber is a Euclidean space, unit sphere, or a unit disk and whose structure group consists of linear transformations where linear transformations are defined in a natural and canonical way. Of course linear bundles are smooth. For general theory of bundles, consult \cite{steenrod}. Consult also \cite{milnorstasheff} for linear bundles or so-called {\it vector bundles}, which are specific linear bundles and whose fibers are vector spaces.
	The so-called {\it orthogonal group} of {\it degree $k$} is regarded as a natural structure group of a linear bundle whose fiber is the Euclidean space ${\mathbb{R}}^k$, the unit disk $D^k$, and the unit sphere $S^{k-1}$ for $k \geq 1$.
	For a linear bundle whose fiber is as this with a positive integer $k^{\prime}>0$ satisfying $k-k^{\prime}>0$, it is said to be {\it reduced by degree $k^{\prime}$} if the structure group is reduced to a natural Lie subgroup isomorphic to the orthogonal group of degree $k-k^{\prime}$.
	\begin{Prop}[\cite{saeki1,saeki2}]
		\label{prop:1}
		A special generic map $f:M \rightarrow N$ on an $m$-dimensional closed and connected manifold $M$ into an $n$-dimensional connected and non-compact manifold $N$ with no boundary enjoys the following properties.
		\begin{enumerate}
			\item \label{prop:1.1}
			There exists an $n$-dimensional compact and connected smooth manifold $W_f$, a smooth surjection $q_f:M \rightarrow W_f$ and a smooth immersion $\bar{f}:W_f \rightarrow N$ enjoying the relation $f=\bar{f} \circ q_f$. Furthermore $q_f$ can be taken as a map mapping the singular set $S(f)$ of $f$ onto the boundary $\partial W_f \subset W_f$ as a diffeomorphism.
			\item \label{prop:1.2}
			There exists a small collar neighborhood $N(\partial W_f)$ of the boundary $\partial W_f \subset W_f$. Furthermore, we can have one so that the following two are enjoyed.
			\begin{enumerate}
				\item \label{prop:1.2.1}
				The composition of the restriction of $q_f$ to the preimage ${q_f}^{-1}(N(\partial W_f))$ with the canonical projection to $\partial W_f$ gives a linear bundle whose fiber is the {\rm (}$m-n+1${\rm )}-dimensional unit disk $D^{m-n+1}$.
				\item \label{prop:1.2.2}
				The restriction of $q_f$ to the preimage of $W_f-{\rm Int}\ N(\partial W_f)$ gives a smooth bundle whose fiber is the {\rm (}$m-n${\rm )}-dimensional unit sphere $S^{m-n}$. In some specific case, such as the case $m-n=0,1,2,3$, the bundle is linear.
		\end{enumerate}
	\end{enumerate}
	\end{Prop}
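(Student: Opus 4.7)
The plan is to combine the local normal form of a definite fold with the Stein factorization. A special generic map has $i(p)=0$ at every singular point, so near each $p\in S(f)$ one has local coordinates in which $f$ is $(x_1,\dots,x_{n-1},y_1,\dots,y_{m-n+1})\mapsto (x_1,\dots,x_{n-1},\sum_{j=1}^{m-n+1} y_j^2)$. From this it is immediate that $S(f)$ is a smooth closed $(n-1)$-dimensional submanifold of $M$ and that $f|_{S(f)}$ is an immersion; moreover, the local preimage of a small one-sided neighborhood of $f(p)$ is a single $(m-n+1)$-disk collapsing radially onto $S(f)$.

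I would then construct $W_f$ as the Stein factorization of $f$: put $p\sim p'$ when the two points lie in the same connected component of a common fiber of $f$, set $W_f:=M/\mathord{\sim}$, and let $q_f$ and $\bar f$ be induced. Since $M$ is compact, $q_f$ is proper, $\bar f$ is continuous, and $f=\bar f\circ q_f$ by construction. The essential step is to promote $W_f$ to a smooth $n$-manifold with boundary. Away from the singular values this is the standard trick of locally choosing a section of the components of fibers of a submersion. At a singular value, the local Stein factorization of $(x,y)\mapsto (x,\|y\|^2)$ is exactly the half-space chart $\mathbb{R}^{n-1}\times[0,\infty)$ via $(x,y)\mapsto(x,\|y\|^2)$; the ambiguity in the normal form is absorbed by a right-left equivalence that induces a diffeomorphism of half-spaces, so the charts patch. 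Then $\partial W_f=q_f(S(f))$, the restriction $q_f|_{S(f)}\colon S(f)\to\partial W_f$ is a diffeomorphism, and $\bar f$ is an immersion since it is a local diffeomorphism on $\operatorname{Int} W_f$ and agrees with $f\circ (q_f|_{S(f)})^{-1}$ on $\partial W_f$.

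For the bundle statements I would take a smooth collar $N(\partial W_f)\cong \partial W_f\times[0,\varepsilon)$. In each local fold chart its preimage under $q_f$ is $\{\|y\|^2\leq\varepsilon\}$, visibly a linear $D^{m-n+1}$-bundle with transition functions in $O(m-n+1)$; patching these via a partition of unity and, if necessary, smoothing corners gives the linear disk bundle of claim (2a). Over $W_f-\operatorname{Int} N(\partial W_f)$, the map $q_f$ is a proper smooth submersion with compact fibers, so Ehresmann's theorem produces a smooth fiber bundle; the fibers are connected components of regular fibers of $f$ and agree on the collar boundary with $\partial D^{m-n+1}=S^{m-n}$. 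Linearity in codimension $m-n\in\{0,1,2,3\}$ then follows from the classical reductions $\operatorname{Diff}(S^{k})\simeq O(k+1)$ for $k=0$ (trivial), $k=1,2$ (Smale), and $k=3$ (Hatcher).

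The main obstacle I anticipate is ensuring that the smooth structure on $W_f$ really is canonical across $\partial W_f$: the Stein quotient is a priori only a topological quotient, and to obtain a well-defined smooth atlas one must verify that half-space charts coming from different choices of fold normal form on a neighborhood of $S(f)$ differ by a diffeomorphism of $\mathbb{R}^{n-1}\times[0,\infty)$. This reduces to an equivariant tubular neighborhood argument for the $O(m-n+1)$-action on the normal bundle to $S(f)$, using that definite folds are right-left rigid along their singular sets; the remaining pieces of the proposition are then bookkeeping.
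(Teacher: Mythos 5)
The paper does not actually prove this proposition --- it is quoted from \cite{saeki1,saeki2} --- and your argument (Stein factorization, half-space charts coming from the definite-fold normal form $(x,y)\mapsto(x,\|y\|^{2})$, reduction of the disk bundle's structure group to $O(m-n+1)$ via the parametrized Morse lemma, Ehresmann's theorem over $W_f-\mathrm{Int}\,N(\partial W_f)$, and $\mathrm{Diff}(S^{k})\simeq O(k+1)$ for $k\le 3$) is precisely the standard proof found in those references. Two small nits: the equivalence $\mathrm{Diff}(S^{1})\simeq O(2)$ is classical rather than due to Smale, and the ``partition of unity'' patching of the local linear structures is better subsumed under the equivariant tubular neighborhood/parametrized Morse lemma argument you correctly identify in your final paragraph.
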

\begin{Def}[\cite{kitazawa9}]
	\label{def:2}
	For a special generic map $f$ in Proposition \ref{prop:1}, the bundle of (\ref{prop:1.2.1}) is called the {\it boundary linear bundle} of $f$ and that of (\ref{prop:1.2.2}) is called the {\it internal smooth bundle} of $f$.
\end{Def}

The following gives a necessary and sufficent condition for a closed and connected manifold to admit a special generic map into a fixed connected and non-compact manifold with no boundary.

	\begin{Prop}[\cite{saeki1}]
		\label{prop:2}
		Let $\bar{N}$ be an $n$-dimensional smooth, compact and connected manifold.
		Given a smooth immersion ${\bar{f}}_N:\bar{N} \rightarrow {\mathbb{R}}^n$.
			
		Then we have a special generic map $f:M \rightarrow {\mathbb{R}}^n$ on a suitable $m$-dimensional closed and connected manifold $M$ enjoying the properties {\rm (}\ref{prop:1.1}{\rm )}, {\rm (}\ref{prop:1.2.1}{\rm )} and {\rm (}\ref{prop:1.2.2}{\rm )} of Proposition \ref{prop:1}.

		Furthermore, let us abuse the notation, and assume the existence of the following bundles where $N(\partial \bar{N})$ is a small collar neighborhood of the boundary $\partial \bar{N}$.
 		 \begin{itemize}
		 	\item A smooth bundle $S_{W_f,S^{m-n}}$ over $\bar{N}-{\rm Int}\ N(\partial \bar{N})$ whose fiber is the unit sphere $S^{m-n}$ and whose restriction to the boundary $\partial (\bar{N}-{\rm Int}\ N(\bar{N}))$ is regarded as a linear bundle.
		 	\item A linear bundle $S_{\partial W_f,D^{m-n+1}}$ over $\partial (\bar{N}-{\rm Int}\ N(\bar{N}))$ whose fiber is the unit disk $D^{m-n+1}$ and whose subbundle obtained by restricting the fiber to the boundary is a linear bundle equivalent to the previous linear bundle over $\partial (\bar{N}-{\rm Int}\ N(\partial \bar{N}))$ as a linear bundle.
		 	\end{itemize}
	We can have a special generic map $f$ enjoying the following properties where we consider suitable identifications between $\bar{N}$ and $W_f$ and their boundaries and natural identifications between the chosen small collar neighborhoods $N(\partial \bar{N})$ and $N(\partial W_f)$ and $\partial (\bar{N}-{\rm Int}\ N(\partial \bar{N}))$ and $\partial (W_f-{\rm Int}\ N(\partial W_f))$.

		\begin{enumerate}
		
			\item \label{prop:2.1}
			The internal smooth bundle of $f$ is a smooth bundle equivalent to the given smooth bundle $S_{W_f,S^{m-n}}$ over $W_f-{\rm Int} N(\partial W_f)=\bar{N}-{\rm Int}\ N(\partial \bar{N})$.
				\item \label{prop:2.2}
			The boundary linear bundle of $f$ is equivalent to the given linear bundle $S_{\partial W_f,D^{m-n+1}}$.
			
		\end{enumerate}
	Moreover, we can construct a map and an $m$-dimensional closed and connected manifold by gluing the two bundles before via some bundle isomorphism between the smooth bundles defined canonically on the boundaries. In addition, conversely, a special generic map always has such a structure. 
	\end{Prop}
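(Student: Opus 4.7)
Here is the plan. The ``conversely'' assertion is just Proposition~\ref{prop:1} applied to $f$, with its immersion $\bar{f}$ playing the role of ${\bar{f}}_N$, so the real work is the construction. I would realize $M$ as the union of two pieces---the total space $E_1$ of $S_{W_f,S^{m-n}}$ over the interior ${\bar{N}}^{\circ}:=\bar{N}-{\rm Int}\,N(\partial\bar{N})$, and the total space $E_2$ of the linear disk bundle $S_{\partial W_f,D^{m-n+1}}$ over $\partial\bar{N}$---and then compose with ${\bar{f}}_N$ to land in ${\mathbb{R}}^n$.

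The interior piece is immediate: the bundle projection $\pi_1:E_1\to{\bar{N}}^{\circ}$ is a submersion, so ${\bar{f}}_N\circ\pi_1$ has empty singular set. For the boundary piece, write $\pi_2:E_2\to\partial\bar{N}$ for the projection and $\|\cdot\|$ for the fiberwise Euclidean norm coming from the orthogonal structure, and set
\[
q_2\colon E_2\longrightarrow\partial\bar{N}\times[0,1],\qquad q_2(v):=(\pi_2(v),\|v\|^2).
\]
In any linear local trivialisation, $q_2$ takes the form $(y_1,\dots,y_{n-1},x_1,\dots,x_{m-n+1})\mapsto(y_1,\dots,y_{n-1},\sum_j x_j^2)$, which is the index-$0$ fold normal form of Definition~\ref{def:1}. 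Hence $q_2$ is special generic with singular set the zero section, carried diffeomorphically onto $\partial\bar{N}\times\{0\}$, and $q_2^{-1}(\partial\bar{N}\times\{1\})=\partial E_2$ is precisely the boundary sphere bundle of $S_{\partial W_f,D^{m-n+1}}$. Composition with the immersion ${\bar{f}}_N$ preserves the fold structure.

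To finish, I would glue: by hypothesis, the restriction of $S_{W_f,S^{m-n}}$ to $\partial{\bar{N}}^{\circ}$ and the boundary sphere bundle of $S_{\partial W_f,D^{m-n+1}}$ are equivalent as linear $S^{m-n}$-bundles over $\partial\bar{N}$, so any such equivalence produces a closed smooth manifold $M:=E_1\cup E_2$ on which the two maps ${\bar{f}}_N\circ\pi_1$ and ${\bar{f}}_N\circ q_2$ assemble into the desired $f:M\to{\mathbb{R}}^n$. By construction $f$ is special generic with internal smooth bundle $S_{W_f,S^{m-n}}$ and boundary linear bundle $S_{\partial W_f,D^{m-n+1}}$. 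The remaining points, which I expect to require care rather than to be a genuine obstacle, are smoothness at the seam---one reparametrises the profile $\|v\|^2$ on a narrow collar of $\partial E_2$ so that $q_2$ patches smoothly onto $\pi_1$, then applies the standard corner-smoothing---and connectedness of $M$, which is automatic from connectedness of $\bar{N}$ and of the fibers except when $m=n$, where a brief separate check handles the disconnected $S^0$-fibers.
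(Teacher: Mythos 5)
The paper offers no proof of this proposition at all---it is imported verbatim from \cite{saeki1}---so there is no internal argument to compare against. Your construction (the sphere bundle over $\bar{N}-{\rm Int}\ N(\partial\bar{N})$ mapped by the submersion, the fiberwise norm-squared map on the linear disk bundle over the collar supplying the definite folds, glued along the assumed linear $S^{m-n}$-bundle equivalence and composed with $\bar{f}_N$) is precisely the standard construction underlying the cited result, and it is correct; the points you defer---reparametrizing the radial profile at the seam so the two pieces patch smoothly, and the separate connectedness check for the $S^0$-fibered case $m=n$---are indeed routine.
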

	Other than canonical projections of unit spheres, we explain about simplest special generic maps.
	\begin{Ex}
		\label{ex:1}
		Let $l>0$ be an arbitrary positive integer and $m \geq n \geq 2$ integers.
		We choose an arbitrary integer $1 \leq n_j \leq n-1$ for each integer $1 \leq j \leq l$. We consider a connected sum of $l>0$ manifolds the $j$-th manifold of which is the $j$-th manifold in the sequence $\{S^{n_j} \times S^{m-n_j}\}_{j=1}^l$ in the smooth category. We have a special generic map $f:M \rightarrow {\mathbb{R}}^n$ on the resulting manifold $M$ such that in Proposition \ref{prop:2}, the map $\bar{f}$ is an embedding and that its internal smooth bundle and boundary linear bundle are trivial.
		
	\end{Ex}
	
	We introduce several results on special generic maps and manifolds admitting such maps.
	A {\it homotopy sphere} means a smooth manifold which is homeomorphic to a unit sphere whose dimension is at least 1. 
	A {\it standard} (an {\it exotic}) sphere is a homotopy sphere which is diffeomorphic to some unit sphere (resp. not diffeomorphic to any unit sphere). Except $4$-dimensional exotic spheres, homotopy spheres are mutually PL homeomorphic as the canonically defined PL manifolds.
	It is also well-known that $4$-dimensional exotic spheres do not enjoy this property.
	
	\begin{Thm}[\cite{saeki1,saeki2}]
		\label{thm:1}
		\begin{enumerate}
			\item
			\label{thm:1.1}
			Let $m$ be an arbitrary integer satisfying $m \geq 2$.
			An $m$-dimensional closed and connected manifold $M$ admits a special generic map $f:M \rightarrow {\mathbb{R}}^2$ if and only if $M$ is either of the following manifolds.
			\begin{enumerate}
				\item A homotopy sphere whose dimension is not $4$.
				\item A $4$-dimensional standard sphere.
				\item A manifold
				represented as a connected sum of smooth manifolds taken in the smooth category where each manifold here is the total space of a smooth bundle over $S^1$ whose fiber is either of the following two.
				\begin{enumerate}
					\item An {\rm (}$m-1${\rm )}-dimensional homotopy sphere where $m \neq 5$.
					\item A $4$-dimensional standard sphere.
				\end{enumerate}
			\end{enumerate}
			\item
			\label{thm:1.2}
			Let $m$ be an arbitrary integer satisfying $m \geq 4$.
			An $m$-dimensional closed and simply-connected manifold $M$ admitting a special generic map $f:M \rightarrow {\mathbb{R}}^3$ is either of the following manifolds.
			\begin{enumerate}
			\item A homotopy sphere whose dimension is not $4$.
			\item A $4$-dimensional standard sphere.
				\item A manifold
				represented as a connected sum of smooth manifolds taken in the smooth category where each manifold here is the total space of a smooth bundle over $S^2$ whose fiber is either of the following two.
				\begin{enumerate}
				\item An {\rm (}$m-2${\rm )}-dimensional homotopy sphere where $m \neq 6$.
	      		\item A $4$-dimensional standard sphere.
				\end{enumerate}
			\end{enumerate}
			In the case $m=4,5$, the converse also holds where a fiber of each bundle is an {\rm (}$m-2${\rm)}-dimensional standard sphere and the bundles are linear.
			\item
			\label{thm:1.3}
		Both in the cases {\rm (}\ref{thm:1.1}{\rm )} and {\rm (}\ref{thm:1.2}{\rm )},
		if the manifold $M$ is not a homotopy sphere, then we have a special generic map as in Example \ref{ex:1} where the internal smooth bundle or the boundary linear bundle of it may not be trivial. In these cases, take $n_j=1$ and $n_j=2$ in Example \ref{ex:1}, respectively.
		\end{enumerate}
	\end{Thm}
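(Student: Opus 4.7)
My plan is to build the classification on Propositions \ref{prop:1} and \ref{prop:2}, which reduce the study of a special generic map $f \colon M \to \mathbb{R}^n$ to three pieces of data: the immersed $n$-manifold $W_f \hookrightarrow \mathbb{R}^n$, the boundary linear disk bundle, and the internal sphere bundle. The first step is to classify the possible $W_f$. For $n=2$, a compact connected surface that admits a codimension-$0$ immersion into $\mathbb{R}^2$ must be planar, so it deformation retracts onto a wedge of circles and admits a handle decomposition using only $0$- and $1$-handles. For $n=3$ with simply-connected $M$, I would first show $W_f$ is itself simply-connected by exploiting that the fibers of $q_f$ over the interior are spheres of dimension $m-n \ge 1$ (hence connected) together with a van Kampen argument across the boundary collar; a simply-connected compact $3$-manifold with boundary immersing in $\mathbb{R}^3$ then, by a Poincar\'e-type argument in dimension $3$, is built from a $3$-ball by attaching $2$-handles, and it deformation retracts onto a wedge of $2$-spheres.

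Next, I would organize $M$ using a handle decomposition of $W_f$. A $0$-handle contributes a copy of the canonical special generic map on a homotopy sphere, realized by gluing a trivial $D^{m-n+1}$-bundle over $S^{n-1}$ to a trivial $S^{m-n}$-bundle over $D^n$. Each $1$-handle in the $n=2$ case (resp.\ each $2$-handle in the $n=3$ case) glues in a piece that corresponds, after reconstructing via Proposition \ref{prop:2}, to a connected summand which is the total space of a smooth bundle over $S^1$ (resp.\ $S^2$) with sphere fiber of dimension $m-1$ (resp.\ $m-2$); thus $M$ is expressible as a connected sum of the stated form.

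The converse directions follow by reversing the construction: starting from any connected sum of bundles over $S^1$ (or $S^2$) with sphere fibers, one exhibits the desired special generic map by imitating Example \ref{ex:1}, taking $n_j=1$ for the two-dimensional case and $n_j=2$ for the three-dimensional case and gluing the corresponding fold maps along trivially framed $S^{m-n}$-bundles over the boundary circles. For part (\ref{thm:1.3}), the key observation is that as soon as $M$ has a nontrivial connected summand one may, after isotopy, arrange $\bar{f}$ to be an embedding and make both the boundary linear bundle and the internal smooth bundle trivial; this recovers exactly the model of Example \ref{ex:1}.

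The main obstacle I expect is the careful bookkeeping of the internal sphere bundle over $W_f - \mathrm{Int}\, N(\partial W_f)$ and of the boundary linear bundle over $\partial W_f$. For the low fiber dimensions $m-n \in \{0,1,2,3\}$ the internal bundle is linear by Proposition \ref{prop:1} and can be classified using $\pi_{n-1}$ of the relevant orthogonal group; but identifying the diffeomorphism type of each connected summand still requires a delicate surgery argument, and the exotic phenomena in dimension $4$, which force the separate treatment of $4$-dimensional standard spheres and the exclusions $m \neq 5$ and $m \neq 6$ in the respective cases, enter precisely at this step via the failure of smooth h-cobordism in dimension $4$.
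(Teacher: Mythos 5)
Theorem \ref{thm:1} is not proved in this paper at all: it is imported verbatim from \cite{saeki1,saeki2}, so there is no in-paper argument to compare yours against. Your sketch does reconstruct the broad strategy of Saeki's original proofs — use Proposition \ref{prop:1} to pass to the quotient $W_f$, pin down its topology, rebuild $M$ from a handle decomposition of $W_f$ with only $0$- and $1$-handles (resp. $0$- and $2$-handles), and run Proposition \ref{prop:2} backwards for the converse — and that outline is the right one.

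There are, however, concrete errors. First, for $n=2$ you assert that $W_f$ must be planar; this is false (a once-punctured torus is parallelizable and hence admits a codimension-$0$ immersion into ${\mathbb{R}}^2$), though the slip is harmless since every compact surface with nonempty boundary already retracts to a wedge of circles and has a handle decomposition with only $0$- and $1$-handles. Second, and more seriously, your treatment of part (\ref{thm:1.3}) inverts the statement: you claim that whenever $M$ has a nontrivial summand one can make both the boundary linear bundle and the internal smooth bundle trivial, whereas the theorem says precisely that these bundles \emph{may fail} to be trivial — and they must fail when, for example, $M$ is the non-orientable $S^{m-1}$-bundle over $S^1$: $W_f$ immerses in the plane and is therefore orientable, so a trivial internal bundle would force $M$ to be orientable. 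A special generic map with embedded $\bar{f}$ and both bundles trivial yields only connected sums of products $S^{n_j}\times S^{m-n_j}$ as in Example \ref{ex:1}. Third, you assert the converse ``by reversing the construction'' in both cases, but for $n=3$ the theorem claims the converse only for $m=4,5$, with linear bundles and standard sphere fibers; for general $m$ the reversal breaks down because an arbitrary smooth $S^{m-2}$-bundle over $S^2$ need not admit the compatible linear $D^{m-2}$-bundle data over $\partial W_f$ that Proposition \ref{prop:2} requires. These three points would have to be repaired before the sketch could stand as a proof.
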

	\begin{Def}[\cite{kitazawa0.1,kitazawa0.2,kitazawa0.3}]
		\label{def:3}
	Let $m \geq n \geq 1$ be integers and $f:M \rightarrow {\mathbb{R}}^n$ a fold map on an $m$-dimensional closed and connected manifold $M$.
	$f$ is a {\it round} fold map if either of the following two holds.
	\begin{enumerate}
		\item $n=1$, at distinct singular points of $f$ the values are distinct, and there exist a point in $\mathbb{R}$ such that $f^{-1}(a)$ has no singular points and a pair $(\Phi:f^{-1}((-\infty,a]) \rightarrow f^{-1}([a,+\infty)) ,{\phi}_{\mathbb{R}}:(-\infty,a] \rightarrow [a,+\infty))$ of diffeomorphisms such that the relation $f {\mid}_{f^{-1}([a,+\infty))} \circ \Phi={\phi}_{\mathbb{R}} \circ f {\mid}_{f^{-1}((-\infty,a])}$ (where the spaces of the targets of some of the maps are suitably restricted). $f^{-1}((-\infty,a])$ and $f^{-1}([a,+\infty))$ are called {\it axis manifolds}.
		
		\item $n \geq 2$ and the image $f(S(f))$ of the singular set is concentric. In other words, for some diffeomorphism ${\phi}_{{\mathbb{R}}^n}$ and an integer $l>0$, the relation
		$({\phi}_{{\mathbb{R}}^n} \circ f)(S(f))=\{x \in {\mathbb{R}}^n \mid 1 \leq ||x|| \leq l, ||x|| \in \mathbb{N}\}$ holds.
	\end{enumerate}
\end{Def}

Hereafter, for a {\it round} fold map $f:M \rightarrow {\mathbb{R}}^n$ with $n \geq 2$, we also assume the condition $f(S(f))=\{x \in {\mathbb{R}}^n \mid 1 \leq ||x|| \leq l, ||x|| \in \mathbb{N}\}$ by omitting the diffeomorphism ${\phi}_{{\mathbb{R}}^n}$. We can see this has no problem.

For such a round fold map, compose the restriction of $f$ to the preimage $f^{-1}(\{x \in {\mathbb{R}}^n \mid ||x|| \geq \frac{1}{2}\})$ with the canonical projection to $S^{n-1}$ mapping $x$ to $\frac{1}{||x||}x$, we have a smooth bundle. We call this a {\it global bundle} of $f$ and the fiber an {\it axis manifold} of $f$.
Let $C_j:=\{x \in f(S(f)) \mid ||x||=j\}$ for $j \in \mathbb{N}$ satisfying $1 \leq j \leq l$. Let $N_0(C_j):=\{x \in f(S(f)) \mid j-\frac{1}{2} \leq ||x|| \leq j+\frac{1}{2}\}$.
Instead, compose the restriction of $f$ to the preimage $f^{-1}(N_0(C_j))$ with the canonical projection to the unit sphere $S^{n-1}$ mapping $x$ to $\frac{1}{||x||}x$.
We call this a {\it local bundle at $C_j$} of $f$.
We can define the following.
	\begin{Def}
		\label{def:4}
	Let $m \geq n \geq 1$ be integers and $f:M \rightarrow {\mathbb{R}}^n$ a round fold map. 

	\begin{enumerate}
		\item If $n=1$ or $n \geq 2$ and a global bundle of $f$ is trivial, then $f$ is said to {\it have a globally trivial monodromy}.
		\item If $n=1$ or $n \geq 2$ and local bundle at each connected component of $f(S(f))$ of $f$ is trivial, then $f$ is said to {\it have componentwisely trivial monodromies}.
	\end{enumerate}
\end{Def}
We can also check that canonical projections of unit spheres are also round fold maps having globally trivial monodromies and componentwisely trivial monodromies as another exercise.
	
	\begin{Thm}[\cite{kitazawa0.1,kitazawa0.2,kitazawa0.5}]
		\label{thm:2}
		Let $m>n \geq 1$ be integers.
		Let $\Sigma$ be an {\rm (}$m-n${\rm )}-dimensional homotopy sphere which is not a $4$-dimensional exotic sphere. A closed and connected manifold $M$ admits a round fold map $f:M \rightarrow {\mathbb{R}}^n$ having a globally trivial monodromy and satisfying the following conditions if and only if it is the total space of a smooth bundle over $S^{n}$ whose fiber is diffeomorphic to $\Sigma$.
			
			 \begin{enumerate}
			 	\item $n=1$ and the singular set of $f$ consists of exactly four points.
			 	Let the three connected components of $f(M) \bigcap (\mathbb{R}-f(S(f)))$ be denoted by $R_1$, $R_2$, and $R_3$, respectively. We can define so that the closures of $R_{j_1}$ and $R_{j_2}$ intersect if and only if $(j_1,j_2) \neq (1,3),(3,1)$.
			 	 The preimage of point in $R_j$ for $f$ is diffeomorphic to $S^{m-n}$ for $j=1,3$ and $\Sigma \sqcup \Sigma$ for $j=2$. 
			\item $n \geq 2$ and $l=2$ in Definition \ref{def:3}. The preimage of a point of $x \in {\mathbb{R}}^n$ for $f$ satisfying $||x|| < 1$ is diffeomorphic to $\Sigma \sqcup \Sigma$. The preimage of a point of $x \in {\mathbb{R}}^n$ satisfying $1<||x||<2$ for $f$ is diffeomorphic to an {\rm (}$m-n${\rm )}-dimensional standard sphere.
			\end{enumerate}	
\end{Thm}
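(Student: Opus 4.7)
The plan is to prove the biconditional case by case on $n$, using the axis manifold as the central object of the analysis. In both cases the proof hinges on a decomposition of $M$ into two trivial inner caps of the form $D^{n} \times \Sigma$ glued to an outer piece $S^{n-1} \times L$ (or its $S^{0}$-analogue when $n=1$), where $L$ is the axis manifold of the round fold map and the product structure on the outer piece comes directly from the globally trivial monodromy hypothesis.

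For the forward direction I begin with the round fold map $f$ and its singular structure. Globally trivial monodromy immediately yields $f^{-1}(\{x : \|x\|\ge 1/2\}) \cong S^{n-1} \times L$, and since the disk of radius $1/2$ is contractible, the inner preimage $f^{-1}(\{x : \|x\|\le 1/2\})$ is a disjoint union of two copies of $D^n \times \Sigma$. Reading off the Morse function given by the radial coordinate on $L$, the axis manifold carries a handle structure with one indefinite fold (index $1$, at $\|x\|=1$) merging $\Sigma \sqcup \Sigma$ into a standard $S^{m-n}$, and one definite top fold (index $m-n+1$, at $\|x\|=2$). Since $\Sigma$ is not a $4$-dimensional exotic sphere, Cerf-style smooth handle cancellation identifies $L$ with the cylinder $\Sigma \times [0,1]$. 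Absorbing the boundary collar of $L$ into the two inner caps, one obtains $M \cong (D^n \times \Sigma) \cup (S^{n-1} \times \Sigma \times I) \cup (D^n \times \Sigma)$, where the base $D^n \cup (S^{n-1}\times I) \cup D^n$ is $S^n$, and the two gluings are fiber-preserving diffeomorphisms of $S^{n-1} \times \Sigma$, i.e., clutching functions $S^{n-1} \to \mathrm{Diff}(\Sigma)$. Thus $M$ is the total space of a smooth $\Sigma$-bundle over $S^n$.

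For the reverse direction I run the construction backwards. Given a smooth $\Sigma$-bundle $\pi:E \to S^n$, cover $S^n$ by two closed hemispheres $D^n_\pm$ meeting at the equator $S^{n-1}$, choose trivializations $E|_{D^n_\pm} \cong D^n_\pm \times \Sigma$, and record the clutching function $\phi:S^{n-1} \to \mathrm{Diff}(\Sigma)$. I build $f:E \to \mathbb{R}^n$ by the same pieces-plus-gluing template: on two smaller disk sub-bundles $D^n_{\le 1/2} \times \Sigma$ inside each hemisphere trivialization, take $f$ to be the projection to the disk factor (mapping into $\{x: \|x\|\le 1/2\}$); on the equatorial collar $S^{n-1}\times[1/2,2]\times\Sigma$, take the product of a clutching-compatible bundle isomorphism with an explicit radial fold model on the cylinder $\Sigma \times [0,1]$ that carries the two concentric folds at $\|x\|=1$ (indefinite, index $1$) and $\|x\|=2$ (definite, top). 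The identification at $\|x\|=1/2$ uses the clutching $\phi$, so the resulting smooth manifold is $E$ itself; globally trivial monodromy and the preimage data of condition (2) are automatic from the construction, and the singular structure satisfies Definition \ref{def:3}(2) by the local form of Definition \ref{def:1}. The $n=1$ case is the same argument with $S^0$ in place of $S^{n-1}$, the role of the clutching function being played by the monodromy of the $\Sigma$-bundle over $S^1$ carried by the twisted double $(\Phi, \phi_{\mathbb{R}})$ appearing in Definition \ref{def:3}(1).

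The main obstacle is the identification $L \cong \Sigma \times [0,1]$ in the forward direction. A priori $L$ is only characterized as a smooth cobordism from $\Sigma \sqcup \Sigma$ to $\emptyset$ with one index-$1$ handle and one index-$(m-n+1)$ handle, and different attaching data could in principle produce non-diffeomorphic cobordisms. The exclusion of $4$-dimensional exotic spheres is what lets us invoke smooth uniqueness of handle attachments, equivalently the cancellation of the two critical points against the trivial Morse function on $\Sigma \times [0,1]$. Once $L$ is identified, the remaining verifications---concentricity of the singular image, the fold type at each concentric singular sphere, and the compatibility of the clutching function with the round structure---follow from the local fold model in Definition \ref{def:1} and standard bundle-theoretic arguments, while the implicit requirement $\Sigma \# \Sigma \cong S^{m-n}$ needed at the middle level is automatic for homotopy spheres outside dimension $4$.
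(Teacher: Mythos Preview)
The paper does not actually prove this theorem; it is quoted with attribution to \cite{kitazawa0.1,kitazawa0.2,kitazawa0.5} and used as background, so there is no in-paper proof to compare against. I can therefore only assess your argument on its own merits.

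Your overall architecture---split $M$ into the inner piece $D^{n}\times(\Sigma\sqcup\Sigma)$ and the outer piece $S^{n-1}\times L$, identify the axis manifold $L$, and read off a clutching description of a $\Sigma$-bundle over $S^{n}$---is the right one, and the reverse construction via spinning a Morse function on $L$ is also the standard route. However, two steps are not correct as written. First, the assertion that ``$\Sigma\#\Sigma\cong S^{m-n}$ is automatic for homotopy spheres outside dimension $4$'' is false: for instance $\Theta_{7}\cong\mathbb{Z}/28$, so a generator $\Sigma$ has $\Sigma\#\Sigma$ nontrivial. What is always true is $\Sigma\#\overline{\Sigma}\cong S^{m-n}$, and this is what the index-$1$ handle actually produces once orientations induced from $M$ are tracked; in the forward direction the hypothesis already hands you $S^{m-n}$ at the middle level, so nothing needs to be proved there. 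Second, ``Cerf-style handle cancellation'' cannot identify $L$ with $\Sigma\times[0,1]$: the two interior critical points of the radial Morse function on $L$ have indices $1$ and $m-n+1$, which are not adjacent unless $m-n=1$, so no cancellation is available. The correct route is to observe that $L$ is a simply connected h-cobordism from $\Sigma$ to $\Sigma$ (this follows from the handle description you give) and then invoke the smooth h-cobordism theorem for $m-n\geq 5$, with the low-dimensional cases $m-n\leq 3$ handled directly (where $\Sigma$ is necessarily standard) and $m-n=4$ covered because the hypothesis excludes exotic $4$-spheres. With these two repairs your outline goes through.
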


Hereafter, we present results on round fold maps which are not crucial in our paper essentially.

	\begin{Thm}[\cite{kitazawa0.1,kitazawa0.2,kitazawa0.5}]
		\label{thm:3}
	Let $m>n \geq 1$ be integers.
Let $l$ be an arbitrary integer greater than $1$. A closed and connected manifold $M$ represented as a connected sum of $l-1$ manifolds represented as the total spaces of smooth bundles over $S^n$ whose fibers are {\rm (}$m-n${\rm )}-dimensional standard spheres admits a round fold map $f:M \rightarrow {\mathbb{R}}^n$having a componentwisely trivial monodromies and satisfying either of the following conditions.
	\begin{enumerate}
		\item $n=1$ in Definition \ref{def:3} and the singular set of $f$ consists of exactly $2l$ points. Let the family of exactly $2l-1$ connected components of $f(M) \bigcap (\mathbb{R}-f(S(f)))$ be denoted by $\{R_j\}_{j=1}^{2l-1}$. We can define so that the closures of $R_{j_1}$ and $R_{j_2}$ intersect if and only if $|j_1-j_2|=1$. Furthermore, the preimage of a point in $R_j$ is the disjoint union of exactly $m_j$ {\rm (}$m-n${\rm )}-dimensional standard spheres where $m_j:=\min\{2l-1-j,j\}$ for $j \neq l$ and $m_j:=j$ for $j=l$.
		\item $n \geq 2$ in Definition \ref{def:3} and the singular set of $f$ consists of exactly $l$ connected components. Let the family of all $l$ connected components of $f(M) \bigcap ({\mathbb{R}}^n-f(S(f)))$ be denoted by $\{R_j\}_{j=1}^{l}$ where $R_j:=\{x \in {\mathbb{R}}^n \mid j-1 \leq ||x|| \leq j\}$. The preimage of a point in $R_j$ is the disjoint union of exactly $l+1-j$ {\rm (}$m-n${\rm )}-dimensional standard spheres.
	\end{enumerate}
	If $m \geq 2n$ holds, then the converse also holds.
\end{Thm}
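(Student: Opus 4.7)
The plan is to exploit the structural consequences of Definition~\ref{def:3} and Proposition~\ref{prop:1}: a round fold map with concentric singular image decomposes the source into $l$ annular shells indexed by the regions $R_j$, and componentwise triviality of the monodromies lets each shell be treated, up to boundary identifications, as a product of $S^{n-1}$ with a slab of an \emph{axis manifold} $M_L$ of dimension $m-n+1$ equipped with a Morse function $h\colon M_L\to[0,l]$. The two cases $n=1$ and $n\geq 2$ are handled in parallel: the only difference is whether $M_L$ is doubled across its top boundary (for $n=1$) or capped by a bundle of disks (for $n\geq 2$).

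For the \emph{if} direction, I would first construct $(M_L,h)$ by iterated handle attachment so that the level set just above $h=j$ is a disjoint union of exactly $m_j$ copies of $S^{m-n}$. Each interior critical value is taken to be an index-$1$ critical point whose attaching $0$-sphere joins two different sphere components (so two $S^{m-n}$'s fuse to one $S^{m-n}$ by a connected sum), and the outermost critical value $h=l$ is a single index-$(m-n+1)$ cap. Given such $(M_L,h)$, the desired manifold and map are
\[
M:=(M_L\times S^{n-1})\cup_{\partial M_L\times S^{n-1}}(\partial M_L\times D^n),\qquad f(x,\theta):=h(x)\cdot\theta,
\]
extended canonically across the cap for $n\geq 2$, or realized as the double of $(M_L,h)$ for $n=1$. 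Componentwise triviality is then tautological because each local piece is a product. To identify $M$ as a connected sum of $l-1$ smooth $S^{m-n}$-bundles over $S^n$, peel off the two outermost shells: the piece sitting over $R_{l-1}\cup R_l$ is precisely a model for such a bundle joined to the inner part by a connected sum, and induction on $l$ closes the argument.

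For the converse under $m\geq 2n$, I would reverse this dissection. Slicing $M$ along $f^{-1}(S^{n-1}_{j-1/2})$ breaks it into pair-of-pants-type shells plus inner and outer caps; componentwise triviality identifies each $f^{-1}(N_0(C_j))$ with $S^{n-1}$ times an elementary cobordism between disjoint unions of standard $(m-n)$-spheres, and the regular slabs are products of $S^{n-1}$ with trivial $S^{m-n}$-bundles. The hypothesis $m\geq 2n$ is used to guarantee that each slice-and-cap pair can be recognized as the total space of an honest smooth $S^{m-n}$-bundle over $S^n$ (so that the relevant attaching spheres lie in simply connected regions and isotopy classes alone pin down the diffeomorphism type), whence $M$ emerges as a connected sum of $l-1$ such bundles. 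The main obstacle will be this converse reassembly: bookkeeping that the concentric singular rings match up with successive connected summands and ruling out exotic summands is exactly where $m\geq 2n$ becomes essential, whereas the \emph{if} direction is comparatively routine once the axis manifold is built.
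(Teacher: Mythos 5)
The paper itself gives no proof of Theorem \ref{thm:3}: it is quoted from \cite{kitazawa0.1,kitazawa0.2,kitazawa0.5} and explicitly flagged as ``not crucial in our paper essentially,'' so there is no internal argument to compare against. Your overall strategy --- build an axis manifold $M_L$ with a Morse function $h$ whose level sets carry the prescribed numbers of standard spheres, then spin it via the product with ${\rm id}_{S^{n-1}}$ and cap off over the central disk --- is exactly the \emph{spinning construction} that the cited references use and that this paper invokes in the proof of Theorem \ref{thm:7}, so the skeleton is right.

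There is, however, a genuine gap in your ``if'' direction. The manifold you write down, $M=(M_L\times S^{n-1})\cup_{\partial M_L\times S^{n-1}}(\partial M_L\times D^n)$ with $f(x,\theta)=h(x)\cdot\theta$ and with every piece ``a product,'' can only yield connected sums of \emph{trivial} bundles $S^{m-n}\times S^n$. The theorem asserts the statement for connected sums of \emph{arbitrary} smooth $S^{m-n}$-bundles over $S^n$, and the twisting that produces a nontrivial summand has to be inserted somewhere: concretely, each central cap $S^{m-n}\times D^n$ must be glued to the corresponding boundary component of $M_L\times S^{n-1}$ by a clutching function $S^{n-1}\rightarrow {\rm Diff}(S^{m-n})$ realizing the given bundle. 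This is compatible with componentwisely trivial monodromies (the clutching lives over the regular region $R_1$, away from every $C_j$), but it is not ``tautological because each local piece is a product,'' and you must still verify that the summand so produced is the prescribed bundle; your peeling-off/induction step should be run with these twisted gluings in place. Separately, your converse is only a plan, not an argument: the identification of each $f^{-1}(N_0(C_j))$ with $S^{n-1}$ times an elementary cobordism, and the precise use of $m\geq 2n$ (an unknotting/general-position range pinning down the attaching data up to isotopy), are asserted rather than carried out --- this is the part the cited papers spend their effort on, and as written it would not pass as a proof.
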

	\begin{Thm}[\cite{kitazawa0.4}]
		\label{thm:4}
	Let a closed and connected manifold $M_0$ admit a round fold map $f_0:M_0 \rightarrow {\mathbb{R}}^{n}$ having componentwisely trivial monodromies. 
	Let $M$ be the total space of a smooth bundle over $M_0$ whose fiber is connected. For each connected component $C$ of $f(S(f))$, consider a small closed tubular neighborhood $N(C)$ of $C$. Assume that the restriction of the bundle $M$ to each $f^{-1}(N(C))$ is trivial.  
	Then $M$ admits a round fold map $f:M \rightarrow {\mathbb{R}}^{n}$ having componentwisely trivial monodromies.
\end{Thm}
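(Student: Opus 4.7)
The plan is to construct $f$ piecewise: as $f_0\circ\pi$ over the regular locus of $f_0$, and as a suitably modified product over each preimage of a singular annulus. By componentwise triviality of the monodromies of $f_0$, for each singular component $C_j\subset f_0(S(f_0))$ there is a trivialization $f_0^{-1}(N(C_j))\cong F_j\times S^{n-1}$ under which $f_0(x,s)=\mu_j(x)\cdot s$, where $\mu_j:F_j\to[j-\tfrac{1}{2},j+\tfrac{1}{2}]$ is a Morse function whose unique critical value is $j$; after shrinking $N(C_j)$ if necessary, the bundle hypothesis then supplies a further trivialization $\pi^{-1}(f_0^{-1}(N(C_j)))\cong F_j\times S^{n-1}\times F$, where $F$ denotes the fibre of $\pi$.

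Fix a Morse function $h:F\to\mathbb{R}$ with distinct critical values and a smooth bump $\sigma:[j-\tfrac{1}{2},j+\tfrac{1}{2}]\to[0,1]$ that equals $1$ on a neighbourhood of $j$ and vanishes on neighbourhoods of the two endpoints. For a sufficiently small $\epsilon>0$ define, on the product region,
\[
\tilde\mu_j(x,y) := \mu_j(x) + \epsilon\,\sigma(\mu_j(x))\,h(y),\qquad f(x,s,y) := \tilde\mu_j(x,y)\cdot s,
\]
and put $f:=f_0\circ\pi$ on the complement. A direct computation shows that $\tilde\mu_j$ is Morse with critical set $\mathrm{Crit}(\mu_j)\times\mathrm{Crit}(h)$ and critical values $j+\epsilon h(y_0)$; since the cross partials $\partial^2_{xy}\tilde\mu_j$ vanish at every critical point and $\sigma'(j)=0$, the Hessian of $\tilde\mu_j$ is block diagonal and nondegenerate for $\epsilon$ small. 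Hence $f$ is a fold map on each product region whose singular image is a finite disjoint union of concentric spheres clustered near radius $j$.

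The two definitions of $f$ glue smoothly because $\sigma\equiv 0$ near $\mu_j^{-1}(\{j\pm\tfrac{1}{2}\})$ forces $\tilde\mu_j=\mu_j$ on an open neighbourhood of the interface, so both definitions coincide with $f_0\circ\pi$ there; and no new singularities appear in the transition annulus $\{0<\sigma(\mu_j)<1\}$ because $\mathrm{Crit}(\mu_j)\subset\mu_j^{-1}(j)$, where $\sigma\equiv 1$, forcing $\nabla\mu_j\neq 0$ throughout $\mathrm{supp}(\sigma')$. Composing the resulting global fold map with a radial diffeomorphism of $\mathbb{R}^n$ of the form $x\mapsto\Phi(\|x\|)\,x/\|x\|$ that carries the finitely many singular-value radii onto consecutive positive integers then produces a round fold map on $M$. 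By construction each new singular component has a tubular neighbourhood of the form (neighbourhood in $F_j\times F$)$\times S^{n-1}$, so its local bundle over $S^{n-1}$ is trivial, giving componentwise triviality of the monodromies of $f$. The case $n=1$ is entirely analogous, with $S^{n-1}$ replaced by the two-point set of radial signs and componentwise triviality automatic from Definition \ref{def:4}.

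I expect the main technical obstacle to be extracting the simplified local form $f_0(x,s)=\mu_j(x)\cdot s$, with $\mu_j$ independent of $s$, from the hypothesis that the local bundle at $C_j$ is trivial only as a smooth $F_j$-bundle over $S^{n-1}$. This amounts to a simultaneous Morse normalization of the radial-part family $\{\rho(\cdot,s)\}_{s\in S^{n-1}}$, available because every slice shares identical Morse data (a single critical point of the same index, with common critical value $j$). Once this normalization is secured, the Hessian computation and the gluing argument above go through essentially by inspection.
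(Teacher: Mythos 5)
The paper itself contains no proof of Theorem \ref{thm:4}: it is quoted from \cite{kitazawa0.4}, so there is no internal argument to compare yours against line by line. Judged on its own terms, your construction is the natural one and, as far as I can check, correct, and it is in the spirit of the cited reference (whose subject is exactly the construction of round fold maps on smooth bundles): you take $f_0\circ\pi$ away from the singular annuli, and over each trivialized piece $F_j\times S^{n-1}\times F$ you perturb the radial Morse function by $\epsilon\,\sigma(\mu_j)h$. The block-diagonal Hessian computation (using $\sigma'(j)=0$ and $\nabla\mu_j=0$ at critical points), the absence of new critical points on $\mathrm{supp}(\sigma')$, the smooth matching with $f_0\circ\pi$ where $\sigma\equiv 0$, and the triviality of the local bundles at the new singular components sitting at radii $j+\epsilon h(y_0)$ all go through.

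Two points deserve more care, though neither is a fatal gap. First, the ``main technical obstacle'' you flag --- normalizing the radial part to a function $\mu_j(x)$ independent of $s$ --- can simply be avoided: writing $\rho(x,s)$ for the radial part in the given trivialization, the perturbation $\tilde\rho(x,s,y)=\rho(x,s)+\epsilon\,\sigma(\rho(x,s))h(y)$ works verbatim, because for every $s$ the critical points of $\rho(\cdot,s)$ all have value $j$ (they lie over $C_j$), they vary smoothly in $s$ (the singular set of a fold map is a submanifold immersed onto $C_j$), and the same Hessian argument applies slice by slice with the parametrized Morse lemma; the critical values $j+\epsilon h(y_0)$ remain independent of $s$, so the singular image is still a union of concentric spheres. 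Second, the case $n=1$ is dismissed too quickly: Definition \ref{def:3} there requires both that distinct singular points have distinct values and that the function carry the reflection symmetry $(\Phi,\phi_{\mathbb{R}})$, so the perturbation must be performed on the two axis pieces symmetrically (same $h$, same $\epsilon$, transported by $\Phi$), and distinctness of the new values $j+\epsilon h(y_0)$ should be noted explicitly (it holds because for $n=1$ each singular value of $f_0$ has a single singular point over it and $h$ has distinct critical values). Both points should be written out, but the construction stands.
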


\begin{Ex}[\cite{kitazawa0.4}]
	\label{ex:2}
Consider suitable linear bundles over manifolds admitting round fold maps in Theorems \ref{thm:2} and \ref{thm:3}. We can apply Theorem \ref{thm:4} to obtain various maps and manifolds. In general we have infinitely many maps and manifolds.
\end{Ex}

	\section{Decompositions of smooth or PL manifolds in our paper.}
	As decompositions of manifolds, {\it Heegaard splittings} of $3$-dimensional closed and connected manifolds are pioneering notions.
	They are divided into two copies of a $3$-dimensional {\it 1-handlebody}.
	
	Let $m \geq 2$ be an arbitrary integer.
	An $m$-dimensional {\it 1-handlebody} (in the orientable case) is a smooth manifold diffeomorphic to a unit disk or a manifold represented as a connected sum of manifolds each of which is the product of a copy of the circle $S^1$ and the unit disk $D^{m-1}$: we consider the connected sum in the smooth category. Of course this is regarded as a PL manifold in the canonical way uniquely. In considering such manifolds in the PL or the piecewise category, we regard 1-handlebody as a manifold PL homeomorphic to such a manifold whereas in the smooth category we can give differentiable structures different from the standard differentiable structures as above.
	
	In \cite{kitazawa2}, we have defined several classes of decompositions of smooth or PL manifolds respecting the existing classes such as ones in \cite{rubinsteintillmann1,rubinsteintillmann2}, generalizing the dimensions of the manifolds in the theory of Heegaard splittings of $3$-dimensional closed and connected manifolds and $4$-dimensional cases, discussed in a pioneering study \cite{gaykirby}. We also presented non-trivial examples for these introduced classes there.
	
	We concentrate on cases which are general and not wild. In the previous paper, we may ignore our global arguments in our present paper (in most situations) and we admit wildness. There we do not give related explicit wild examples which seem to be meaningful in some senses. Discoveries and presentations of such examples seem to be also difficult in general. 
	
	To define a {\it smooth} or {\it PL} {\it globally neat} ({\it GN}) {\it decompositions} and {\it globally almost neat multibranched} {\it decompositions}.
	See also an argument on trisections (or slightly general decompositions) of standard spheres of \cite{lambertcolemiller}. 

We introduce a natural decomposition of a copy of the unit disk $D^m$ into finitely many copies of the unit disk $D^m$.
		
		 Let $m$ be a positive integer.
			
		 Let $k_1$ be an arbitrary non-negative integer satisfying $0 \leq k_1 \leq m-1$ and let us define ${{\mathbb{R}}^m}_{k_1,\leq}$ as the set of all elements of $(x_1,\cdots,x_m) \in {\mathbb{R}}^m$ such that for any integer $1 \leq j \leq k_1$, $x_j \geq 0$ and that $x_{k_1+1} \leq 0$.
		 
		 Let $k_2$ be an arbitrary non-negative integer satisfying $0 \leq k_2 \leq m$ and let us define ${{\mathbb{R}}^m}_{k_2}$ as the set of all elements $(x_1,\cdots,x_m) \in {\mathbb{R}}^m$ such that for $1 \leq j \leq k_2$, $x_j \geq 0$. 
		 
		 Let $k_3$ be an arbitrary non-negative integer satisfying $0 \leq k_3 \leq m-2$. Let $i>0$ be an arbitrary integer and $i^{\prime}$ an arbitrary integer satisfying $0 \leq i^{\prime}\leq i$. Let us define ${{\mathbb{R}}^m}_{k_3,i,i^{\prime}}$ as the set of all elements $(x_1,\cdots,x_m) \in {\mathbb{R}}^m$ such that for $1 \leq j \leq k_3$, $x_{k_3} \geq 0$ and that $(x_{k_3+1},x_{k_3+2}) \in (r \cos t,r\sin t)$ for some $r \geq 0$ and $i^{\prime}\frac{\pi}{i+1} \leq t \leq (i^{\prime}+1)\frac{\pi}{i+1}$.
		 
	\begin{Def}
		\label{def:5}
			We abuse the integers just before.
		\begin{enumerate}
			
			\item \label{def:5.1} The family $\{D^m \bigcap {{\mathbb{R}}^m}_{j^{\prime}-1,\leq}\}_{j^{\prime}=1}^{k_1+1} \sqcup \{D^m \bigcap {{\mathbb{R}}^m}_{k_2}\}$ is said to define a {\it standard simple multisection of the unit disk $D^m$} of {\it degree $k_1+2 \leq m+1$} where $k_2=k_1+1$.
			\item \label{def:5.2}
			The family $\{D^m \bigcap {{\mathbb{R}}^m}_{j^{\prime}-1,\leq}\}_{j^{\prime}=1}^{k_3+1} \sqcup \{D^m \bigcap {{\mathbb{R}}^m}_{i,i^{\prime}-1}\}_{i^{\prime}=1}^{i+1}$ is said to define a {\it standard almost simple multibranched multisection of the unit disk $D^m$} of {\it degree $k_3+i+2$}.
			\item \label{def:5.3}
		    The family $\{\partial D^m \bigcap {{\mathbb{R}}^m}_{j^{\prime}-1,\leq}\}_{j^{\prime}=1}^{k_1+1} \sqcup \{\partial D^m \bigcap {{\mathbb{R}}^m}_{k_2}\}$ is said to define a {\it standard simple multisection of the unit sphere $S^{m-1}$} of {\it degree $k_1+2 \leq m+1$} where $k_2=k_1+1$.
		    \item \label{def:5.4}
		      The family $\{\partial D^m \bigcap {{\mathbb{R}}^m}_{j^{\prime}-1,\leq}\}_{j^{\prime}=1}^{k_3+1} \sqcup \{\partial D^m \bigcap {{\mathbb{R}}^m}_{i,i^{\prime}-1}\}_{i^{\prime}=1}^{i+1}$ is said to define a {\it standard almost simple multibranched multisection of the unit sphere $S^{m-1}$} of {\it degree $k_3+i+2$}.
			\end{enumerate}
	\end{Def}
We can define the following notions as Definition \ref{def:6} so that the latter notion is a generalization of the former notion. 
\begin{Def}
	\label{def:6}
	\begin{enumerate}
		\item Let $X$ be a smooth (PL) closed and connected manifold. Let $k>0$ be a positive integer. Suppose that a family $\{X_j\}_{j=1}^k$ of smooth (resp. PL) compact and connected submanifolds whose dimensions are $\dim X$ enjoying the following properties exists.
		\begin{enumerate}
			\item ${\rm Int}\ X_{j_1} \bigcap {\rm Int}\ X_{j_2}$ is empty for distinct two numbers $j_1$ and $j_2$.
			\item The intersection ${\bigcap}_{j=1}^k X_j$ is not empty.
		\end{enumerate} 
	Then $\{X_j\}_{j=1}^k$ is said to define an {\it admissible decomposition} of $X$ of degree $k$.
	\item Let $X$ be a smooth (PL) compact and connected manifold. Let $k>0$ be a positive integer. Suppose that a family $\{X_j\}_{j=1}^k$ of smooth (resp. PL) compact and connected submanifolds whose dimensions are $\dim X$ enjoying the following properties exists.
	\begin{enumerate}
		\item ${\rm Int}\ X_{j_1} \bigcap {\rm Int} X_{j_2}$ is empty for distinct two numbers $j_1$ and $j_2$.
		\item The intersection ${\bigcap}_{j=1}^k X_j$ is not empty.
		\item The family $\{{\partial X}_i \bigcap X_j\}_{j=1}^k$ defines an admissible decomposition of $\partial X$ for any connected component ${\partial X}_i$ of $\partial X$.
	\end{enumerate}
	Then $\{X_j\}_{j=1}^k$ is said to define an {\it admissible decomposition} of $X$ of degree $k$.
	\end{enumerate}

\end{Def}
Decompositions od Definition \ref{def:5} are admissible smooth and PL decompositions (where in the cases (\ref{def:5.3}) and \ref{def:5.4}, $m>1$ is assumed).
\begin{Def}\label{def:7}
	An (admissible) smooth or PL decomposition of $X$ defined by $\{X_j\}_{j=1}^k$
is said to be a {\it multisection} if each submanifold is a $1$-handlebody where $1$-handlebodies are considered in the category we consider as presented in the definition of a $1$-handlebody.
	\end{Def}
	
We can define an equivalence relation on the set of all admissible smooth (PL) decompositions as follows.

\begin{Def}\label{def:8}
Two admissible decompositions are {\it smooth} ({\it PL}) {\it equivalent} if there exists a diffeomorphism (resp. piecewise smooth homeomorphism) mapping each submanifold in the family into another submanifold in the family (resp. where corners may be created or eliminated suitably).
\end{Def}

Remember also that smooth manifolds are regarded as PL manifolds in a unique way.

\begin{Def}\label{def:9}
	For a smooth (PL) admissible decomposition of a compact and connected manifold $X$ defined by $\{X_j\}_{j=1}^k$,
\begin{enumerate}
	\item \label{def:9.1}
	 For each point of $p$ and a suitable small smoothly embedded copy $D_p$ of the unit disk $D^{\dim X}$ containing $p$ in the interior, suppose that $\{D_p \subset X_j\}_{j=1}^k$ defines an admissible decomposition of $D_p$ smooth (resp. PL) equivalent to the stand simple multisection of the unit disk $D^{\dim X}$. 
	Then $\{X_j\}_{j=1}^k$ is said to define a {\it globally neat} or {\it GN} decomposition of $X$.
		Here copies of the unit disk may be embedded as manifolds having corners whereas they may be eliminated depending on the situations. 
	\item \label{def:9.2} For each point of $p$ and a suitable small smoothly embedded copy $D_p$ of the unit disk $D^{\dim X}$ containing $p$ in the interior, suppose that $\{D_p \subset X_j\}_{j=1}^k$ defines an admissible decomposition of $D_p$ smooth (resp. PL) equivalent to the standard almost simple multibranched multisection of the unit disk $D^{\dim X}$. 
	Then $\{X_j\}_{j=1}^k$ is said to define a {\it globally almost neat multibranched} or {\it GANM} decomposition of $X$.
	Here copies of the unit disk may be embedded as manifolds having corners whereas they may be eliminated depending on the situations. 
\end{enumerate}	
\end{Def}
These notions are regarded as revised notions in some notions of \cite{kitazawa8}. They also respect $3$-dimensional cases mainly. For $3$-dimensional related studies, \cite{koenig} is a pioneering one, followed by \cite{itoogawa} for example. \cite{ogawa, sakatamishinaogawaishiharakodaozawashimokawa} are also respected for example as related $3$-dimensional studies.
The following also presents our new notion.

We need the notion of a {\it PL bundle}, defined as a bundle whose fiber is a polyhedron and whose structure group consists of piecewise smooth homeomorphisms.


\begin{Def}\label{def:10}
	An admissible smooth (PL) decomposition of a compact and connected manifold $X$ of dimension $\dim X>1$ defined by $\{X_j\}_{j=1}^k$ is said to be doubled {\it GNANM} or {\it DGNANM} if at least one of the following hold. This notion is inductively defined.
	\begin{enumerate}
		\item \label{def:10.1}
		 It is a GN or GANM decomposition of a closed and connected manifold $X$.
		\item \label{def:10.2}
		${\bigcup}_{j=1}^k \partial X_j$ is a polyhedron PL homeomorphic to one obtained by gluing the following three polyhedra and $X$ is a smooth (resp. PL) manifold diffeomorphic to one presented later here.
		\begin{enumerate}
			\item The product of ${\bigcup}_{j=1}^{k^{\prime}} \partial Y_{j}$ and a smooth (resp. PL) compact and connected manifold $F_{Y_1}$ whose boundary is non-empty and diffeomorphic (resp. PL homeomorphic) to a closed manifold $F$ where $\{Y_{j}\}_{j=1}^{k^{\prime}}$ defines an admissible smooth (resp. PL) DGNANM decomposition of a closed and connected manifold $Y$.
				\item The product of ${\bigcup}_{j=1}^{k^{\prime}} \partial Y_{j}$ and a smooth (resp. PL) compact and connected manifold $F_{Y_2}$ whose boundary is non-empty and diffeomorphic (resp. PL homeomorphic) to $F$.
			\item A smooth (PL) closed manifold $P$ of dimension $\dim X-1$.
		\end{enumerate}
		Furthermore, the polyhedron is
		 PL homeomorphic to a polyhedron obtained by gluing the polyhedra PL homeomorphic to ${\bigcup}_{j=1}^{k^{\prime}} (\partial Y_{j}) \times F_{Y_1}$ and ${\bigcup}_{j=1}^{k^{\prime}} (\partial Y_{j}) \times F_{Y_2}$ to a subpolyhedron of $P$, PL homeomorphic to ${\bigcup}_{j=1}^{k^{\prime}} (\partial Y_{j}) \times F$, by piecewise a smooth homeomorphism from the subpolyhedron ${\bigcup}_{j=1}^{k^{\prime}} (\partial Y_{j}) \times F \subset {\bigcup}_{j=1}^{k^{\prime}} (\partial Y_{j}) \times F_{Y_{j}}$ for $i=1,2$ where $F$ is regarded as the boundary of $F_{Y_i}$ in the canonical way. Furthermore, $X$ is diffeomorphic (resp. PL homeomorphic) to a manifold obtained by attaching $Y \times F_{Y_1}$ and $Y \times F_{Y_2}$ along the boundaries $Y \times F \subset Y \times F_{Y_1}$ and $Y \times F \subset Y \times F_{Y_2}$ by a diffeomorphism (resp. piecewise smooth homeomorphism) regarded as the product of a diffeomorphism (resp. piecewise smooth homeomorphism) from $\partial F_{Y_1}$ onto $\partial F_{Y_2}$ and the identity map on $Y$ in such a way that gives the polyhedron before by gluing the two subpolyhedra embedded canonically in the two manifolds here. 
		\item \label{def:10.3}
		$\dim X>2$ is assumed in this case. There exist a smooth (resp. PL) decomposition of a closed and connected manifold $X_0$ of dimension $\dim X=\dim X_0>2$ containing $X_j$ as a smooth (resp. PL) compact and connected submanifold and an admissible smooth (resp. PL) decomposition of $X_0$ defined by a family $\{X_{0,j}\}_{j=1}^k$ of submanifolds and enjoying the relation $X_j=X_{0,j} \bigcap X$. Furthermore, $\{\partial X \bigcap X_{0,j}\}_{j=1}^k$ defines a smooth (resp. PL) DGNANM decomposition of $\partial X$. In the smooth case, ${\bigcap}_{j \in J} X_{0,j}$ is always a smooth compact submanifold of $X_0$ for any non-empty subset of the set of all integers from $1$ to $k$ and the intersection of the tangent vector space of ${\bigcap}_{j \in J} X_{0,j}$ at $p \in {\bigcap}_{j \in J} X_{0,j} \bigcap \partial X$ and that of $\partial X$ at $p$ is the trivial vector space. In other words, the transversality is satisfied: transversality is conditions or properties on differentiable maps and see \cite{golubitskyguillemin} for example.
		
		\item \label{def:10.4}
		 ${\bigcup}_{j=1}^k \partial X_j$ is a polyhedron PL homeomorphic to one obtained by taking the following five steps and $X$ is a smooth (resp. PL) manifold as presented here.\\
		 \ \\
		 STEP 10-4-1 Prepare the following polyhedra.
		\begin{enumerate}
			\item \label{def:10.4.1} 
			The product of ${\bigcup}_{j=1}^{k^{\prime}} \partial Y_{j}$ and a compact and connected PL manifold $F_{Y_1}$ whose boundary is non-empty and diffeomorphic to a closed manifold $F$ where $\{Y_{j}\}_{j=1}^{k^{\prime}}$ defines an admissible smooth (resp. PL) DGNANM decomposition of a compact and connected manifold $Y$ whose boundary $\partial Y$ is not empty.
			\item \label{def:10.4.2}
			 The product of ${\bigcup}_{j=1}^{k^{\prime}} \partial Y_{j}$ and a PL compact and connected manifold $F_{Y_2}$ whose boundary is non-empty and diffeomorphic to $F$.
			\item \label{def:10.4.3}
			A PL compact manifold $P$ of dimension $\dim X-1$ whose boundary is PL homeomorphic to $Y \times F$.
			\end{enumerate}
			\ \\
			STEP 10-4-2 Construct a polyhedron from polehedra in STEP 10-4-1. \\
			Construct a polyhedron $P_{F,Y,{\rm a,b,c}}$ by gluing the polyhedra PL homeomorphic to ${\bigcup}_{j=1}^{k^{\prime}} (\partial Y_{j}) \times F_{Y_1}$ and ${\bigcup}_{j=1}^{k^{\prime}} (\partial Y_{j}) \times F_{Y_2}$ to a subpolyhedron of $P$ in ${\rm Int}\ P$, PL homeomorphic to ${\bigcup}_{j=1}^{k^{\prime}} (\partial Y_{j}) \times F$, by a piecewise smooth homeomorphism from the subpolyhedron ${\bigcup}_{j=1}^{k^{\prime}} (\partial Y_{j}) \times F \subset {\bigcup}_{j=1}^{k^{\prime}} (\partial Y_{j}) \times F_{Y_{j}}$ for $i=1,2$ where $F$ is regarded as the boundary of $F_{Y_i}$ in the canonical way. In this step, we use our polyhedra in (\ref{def:10.4.1}), (\ref{def:10.4.2}) and (\ref{def:10.4.3}). Let $P_{F,Y,{\rm a,b,c}}$ denote this resulting polyhedron. \\
				\ \\
			STEP 10-4-3 Embed $P_{F,Y,{\rm a,b,c}}$ into the product of a closed manifold $E_Y$ and $Y$. \\
			We define a manifold $E_Y$ as follows.
			\begin{enumerate}
				\item \label{def:10.4.3.1}
				$E_Y$ is a PL compact manifold and there exists an admissible PL decomposition of degree $2$ defined by the pair $\{E_{Y_j} \subset E_Y\}_{j=1}^2$.
				\item \label{def:10.4.3.2}
				The pair $\{F_{Y_j}\}_{j=1}^2$ is regarded to  define an admissible PL decomposition of degree $2$ of the boundary $\partial E_Y$ of $E_Y$. Furthermore, $F_{Y_i}$ is identified with $\partial E_{Y} \bigcap E_{Y_i}$ for $i=1,2$.
			\end{enumerate}
		We can realize $P_{F,Y,{\rm a,b,c}} \subset \partial E_Y \times Y$ as a subpolyhedron in a natural way and we do so.\\
		\ \\
		STEP 10-4-4 We prepare another manifold $E_Y \times Y$ and a sunpolyhedron $P_{E,Y}$. \\
		We define a subpolyhedron $P_{E,Y}$ as the union of $E_Y \times (({\bigcup}_{j=1}^{k^{\prime}} \partial Y_{j}) \bigcap \partial Y) \subset E_Y \times \partial Y$ and $(\partial E_{Y_i}-{\rm Int}\ F_{Y_i}) \times \partial Y \subset E_Y \times \partial Y$.\\
		\ \\
		STEP 10-4-5 We glue the polyhedra and their outer manifolds via a bundle isomorphism between the outer manifolds, regarded as trivial bundles and obtain our desired polyhedron. \\
		We glue the trivial smooth (resp. PL) bundles $\partial E_Y \times Y$ over $\partial E_Y$ and $E_Y \times \partial Y$. This gives a desired manifold diffeomorphic (resp. PL homeomorphic) to $X$. We glue them via a bundle isomorphism between the trivial smooth (resp. PL) bundles over $\partial E_Y$ whose fibers are diffeomorphic to (resp. PL homeomorphic to) $\partial Y$ induced on the boundaries.
		We can glue the subpolyhedra $P_{F,Y,{\rm a,b,c}}$ and $P_{E,Y}$ together in this operation and presents our desired polyhedron ${\bigcup}_{j=1}^k \partial X_j$.

\item \label{def:10.5} $X$ and $\{X_j\}_{j=1}^k$
are obtained in the following way from two suitable smooth (resp. PL)
DGNANM decompositions in the following. One of these two is of a compact and connected manifold $X_1$ and defined by a family of submanifolds $\{X_{1,j}\}_{j=1}^k$ and the other is of a compact and connected manifold $X_2$ and defined by a family $\{X_{2,j}\}_{j=1}^k$ of submanifolds. Furthermore, $\dim X_i=\dim X$ holds and the following properties are enjoyed.
\begin{itemize}
	\item 
For each point $p_i$ of ${\rm Int}\ X_i$, let $\{X_{i,j}\}_{j \in J_{i,p}}$ be the set of all submanifolds in the family $\{X_{i,j}\}_{j=1}^k$ containing $p_i$ where $p_i$ may not be in the interior of each submanifold. Then we can take a suitable small copy $D_{J_{i,p}} \subset {\rm Int}\ X_i$ of the unit disk $D^{\dim X}$ containing $p_i$ in the interior in such a way that $\{D_{J_{i,p}} \bigcap X_{i,j}\}_{j \in J_{i,p}}$ and $\{\partial D_{J_{i,p}} \bigcap X_{i,j}\}_{j \in J_{i,p}}$ define DGNANM disk decompositions. 
\item The two disk decompositions $\{D_{J_{1,p}} \bigcap X_{1,j}\}_{j \in J_{1,p}}$ and $\{D_{J_{2,p}} \bigcap X_{2,j}\}_{j \in J_{2,p}}$
are smooth (resp. PL) equivalent. A diffeomorphism (resp. piecewise smooth homeomorphism) ${\Phi}_{X,1,2}:D_{J_{1,p}} \rightarrow D_{J_{2,p}}$ maps each submanifold into a submanifold in the other family and this makes them smooth (resp. PL) equivalent.
\item As in (\ref{def:10.3}) here, we can define a smooth (resp. PL) DGNANM decomposition of $X_i-{\rm Int}\ D_{J_i,p}$, defined by $\{(X_i-{\rm Int}\ D_{J_{i,p}}) \bigcap X_{i,j}\}_{j \in J_{i,p}}$ for $i=1,2$.
\end{itemize}
${\Phi}_{X,1,2}$ gives a connected sum of $X_1$ and $X_2$ by the definition in the smooth category (resp. PL category) and this presents our desired smooth (resp. PL) manifold $X$. 
\end{enumerate}
\end{Def}
\begin{Def}
	\label{def:11}
		An admissible smooth (PL) decomposition of a compact and connected manifold $X$ of dimension $\dim X>1$ defined by $\{X_j\}_{j=1}^k$ is said to be {\it DGNANM outside a smooth }{\rm (}{\it resp. PL}{\rm )} {\it submanifold $F$} with no boundary if we can choose a small closed tubular (resp. regular) neighborhood $N(F)$ of a smooth (resp. PL) closed submanifold $F$ with no boundary in ${\rm Int}\ X$ in such a way that $\{X_j \bigcap (X-{\rm Int}\ N(F))\}_{j=1}^k$ defines a DGNANM decomposition of $X-{\rm Int}\ N(F)$. 
\end{Def}
In our Main Theorems, new notions in Definitions \ref{def:10} and \ref{def:11} are important.
Hereafter, an admissible decomposition or multisection is also called a {\it disk decomposition} if the family of the submanifolds consists of copies of a unit disk. Admissible decompositions in Definition \ref{def:5} are all disk decompositions.

By the definition and considering the local structures, we can easily know the following fundamental property. We only present important ingredients in a proof.
\begin{Prop}
		\label{prop:3}
	GN or GANM decompositions are always DGNANM decompositions in Definition \ref{def:10} {\rm (}\ref{def:10.3}{\rm )}.
	\end{Prop}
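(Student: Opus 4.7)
The plan is to split the argument into the closed case and the case with non-empty boundary, and to realize the DGNANM structure through the doubling construction.

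If $X$ is closed, Definition \ref{def:10} (\ref{def:10.1}) immediately yields that $\{X_j\}_{j=1}^k$ is DGNANM, and one may formally realize this through clause (\ref{def:10.3}) by taking $X_0 := X$, $X_{0,j} := X_j$, so that the equality $X_j = X_{0,j} \cap X$, the admissibility hypothesis, and the transversality condition are tautological and the condition on $\partial X$ is vacuous.

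Assume $\partial X \neq \emptyset$ and $\dim X > 2$. I would take $X_0$ to be the double $X \cup_{\partial X} X$ and $X_{0,j}$ to be the double of $X_j$ along $\partial X \cap X_j$. The first step is to check that $\{X_{0,j}\}_{j=1}^k$ is an admissible smooth (resp. PL) decomposition of the closed manifold $X_0$: interior disjointness is preserved under the doubling, $\bigcap_j X_{0,j}$ contains the double of $\bigcap_j X_j$ and is therefore non-empty, and $X_j = X_{0,j} \cap X$ by construction. The transversality condition in Definition \ref{def:10} (\ref{def:10.3}) is then automatic, because $\partial X \subset X_0$ is two-sided and meets each stratum $X_{0,j}$ transversally by virtue of the doubling.

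The second step is to verify that the doubled decomposition on $X_0$ is again GN (resp. GANM), so that it is DGNANM by clause (\ref{def:10.1}) of Definition \ref{def:10}. At points in the interior of either copy of $X$ the local model is inherited from the original decomposition; at a point $p\in\partial X\subset X_0$ the key observation is that the standard simple multisection and the standard almost simple multibranched multisection of Definition \ref{def:5} are invariant under the reflection in the hyperplane $\{x_m=0\}$, so the reflected half from the second copy of $X$ fits together with the first copy to give exactly a standard model. Finally, the induced family $\{\partial X \cap X_{0,j}\}_{j=1}^k$ on $\partial X$ has to be DGNANM: restricting the local GN or GANM model of $X$ to $\partial X$ yields the corresponding standard model one dimension lower (this is exactly the content of Definition \ref{def:5} (\ref{def:5.3}) and (\ref{def:5.4})), so the induced decomposition of the closed manifold $\partial X$ is itself GN (resp. GANM) and is DGNANM again by Definition \ref{def:10} (\ref{def:10.1}).

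The main obstacle I expect is the local verification at boundary points of $X$, namely checking that the standard multisection models really do extend across $\partial X$ to standard models on $X_0$ and restrict to standard models on $\partial X$ one dimension lower. This is a direct computation from the explicit descriptions in Definition \ref{def:5}, but it must be organized carefully in both the smooth and PL categories, with corners eliminated or created as appropriate, and it is the step for which it is sensible to only record the ``important ingredients'' rather than a full proof.
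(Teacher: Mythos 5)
Your proposal is correct and follows essentially the same route as the paper: the paper's (abbreviated) proof likewise handles the non-empty boundary case by passing to the double of $X$ and observing, via the local models of Definitions \ref{def:5} and \ref{def:9}, that the doubled decomposition is GN or GANM on a closed manifold, hence DGNANM. Your additional details on reflection invariance of the standard models and on the induced decomposition of $\partial X$ are exactly the ``important ingredients'' the paper leaves implicit.
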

\begin{proof}[Important ingredients in a proof]
Due to Definitions \ref{def:5}	and \ref{def:9} and the local structures especially, we can consider a double of the given compact and connected manifold in the case where the boundary is non-empty and we have a GN or GNAM decomposition on a closed and connected manifold.
\end{proof}
\begin{Prop}
	\label{prop:4}
	For each point $p$ of $X$ in Definition \ref{def:10} {\rm (}Definition \ref{def:11}{\rm )}, let $\{X_{j}\}_{j \in J_p}$ be the set of all submanifolds in the family $\{X_{j}\}_{j=1}^k$ containing $p$ {\rm (}$p \in X-{\rm Int}\ N(F)${\rm )}. Then we can take a suitable small copy $D_{J_p}$ of the unit disk $D^{\dim X}$ containing $p$ in the interior in such a way that $\{D_{J_p} \bigcap X_{j}\}_{j \in J_p}$ and $\{\partial D_{J_p} \bigcap X_{j}\}_{j \in J_p}$ define DGNANM disk decompositions. Moreover, for $p \in {\rm Int}\ X$, we can define a DGNANM decomposition of $X-{\rm Int}\ D_{J_p}$ as we do in Defintion \ref{def:10} {\rm (}\ref{def:10.3}{\rm )} for a suitably chosen $D_{J_p}$. 
\end{Prop}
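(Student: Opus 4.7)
The plan is to proceed by structural induction on the inductive definition of DGNANM given in Definition~\ref{def:10}, handling the five cases (\ref{def:10.1})--(\ref{def:10.5}) in turn. After that, the ``moreover'' statement and the Definition~\ref{def:11} extension follow by applying case~(\ref{def:10.3}) of Definition~\ref{def:10}.

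For the base case (\ref{def:10.1}), the decomposition is GN or GANM, and the desired small disk $D_{J_p}$ is supplied directly by Definition~\ref{def:9}: the resulting disk decomposition is equivalent to a standard simple multisection or a standard almost simple multibranched multisection of $D^{\dim X}$, which are themselves GN/GANM disk decompositions, hence DGNANM by Proposition~\ref{prop:3}. The induced decomposition of $\partial D_{J_p}$ is then the corresponding standard multisection of the sphere, which is again a GN/GANM (hence DGNANM) disk decomposition of $S^{\dim X-1}$.

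For the inductive cases, I would distinguish according to where the point $p$ sits in the gluing construction. In case~(\ref{def:10.2}), if $p$ lies in the interior of one of the product pieces $Y\times F_{Y_i}$, I take $D_{J_p}$ to be the product of a small disk in $Y$ (supplied by the inductive hypothesis for the DGNANM decomposition of $Y$) and a small disk in $F_{Y_i}$; the product of a DGNANM disk decomposition with a trivial factor is again DGNANM. If $p$ lies on the gluing polyhedron $P$, the local model at $p$ is obtained by doubling along $P$ the two local disk decompositions coming from each side, which by construction fits into case~(\ref{def:10.2}) on the scale of the chosen disk. Case~(\ref{def:10.4}) is handled analogously, with an additional outer $E_Y$ factor. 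In case~(\ref{def:10.3}), I pick a small disk in $X_0$ at $p$ with its DGNANM disk decomposition (by the inductive hypothesis on $X_0$), and intersect with $X$; transversality of $X$ with each $X_{0,j}$ ensures the intersection is a smoothly embedded disk whose induced decomposition is again DGNANM via~(\ref{def:10.3}) applied locally. Case~(\ref{def:10.5}) is immediate from the inductive hypothesis applied to $X_1$ or $X_2$ away from the connected-sum region, while near the sum the required disk decomposition is already explicit in the defining data of~(\ref{def:10.5}).

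For the moreover clause, once $D_{J_p}$ is chosen in $\mathrm{Int}\,X$ with the above local DGNANM disk decomposition, I obtain the DGNANM decomposition of $X-\mathrm{Int}\,D_{J_p}$ by invoking case~(\ref{def:10.3}) of Definition~\ref{def:10} with ambient manifold $X_0:=X$ and submanifold $X-\mathrm{Int}\,D_{J_p}$: the transversality and boundary-compatibility conditions in~(\ref{def:10.3}) reduce precisely to the fact that $\{\partial D_{J_p}\cap X_j\}_{j\in J_p}$ is a DGNANM disk decomposition, which was established in the first part. The Definition~\ref{def:11} case is then a direct corollary: for $p\in X-\mathrm{Int}\,N(F)$, apply the Definition~\ref{def:10} version to the manifold $X-\mathrm{Int}\,N(F)$ in which the decomposition is already DGNANM by assumption.

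The main obstacle I expect is case~(\ref{def:10.2}) (and its elaboration~(\ref{def:10.4})) when $p$ lies on the gluing polyhedron $P$: one has to verify that the local disk decomposition at $p$ is genuinely equivalent to a standard doubled model over a smaller disk decomposition, which requires careful choice of $D_{J_p}$ so that it respects both the product structure on each side and the identification along $P$. A secondary subtlety is the transversality check in case~(\ref{def:10.3}), where the tangent-space condition at boundary points must be propagated down through the induction so that the intersection $D_{J_p}\cap X_j$ is itself a compact manifold with corners that can be smoothed to a disk.
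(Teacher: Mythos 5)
Your proposal follows essentially the same route as the paper's (very brief) proof sketch: the paper records only that the argument is an induction using local structures, with the key ingredient being that the ($\dim X-1$)-dimensional manifolds such as $P$ divide the local disk decompositions into two copies --- precisely the doubling-along-$P$ step you single out as the main obstacle. Your case-by-case elaboration over Definition~\ref{def:10} is consistent with, and considerably more detailed than, what the paper actually writes down.
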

We only present important ingredients in a proof of Proposition \ref{prop:4}.
\begin{proof}[Some important ingredients in a proof of Proposition \ref{prop:4}]
Fundamental technique is considering local structures and applying inductions, for example. More precisely, the fact that some our important($\dim X-1$)-dimensional compact and connected manifolds such as $P$ divide each of our (nice) disk decompostions into two copies is one of key ingredient.
\end{proof}

We present several known studies on GN cases. Note again that GN decompositions, GNANM ones and DGNANM ones are introduced by the author first. In \cite{kitazawa8}, the author presents a pioneering study on good definitions of such classes of decompositions. Some of the classes respect existing studies on nice multisections or more general decompositions. 
\begin{Thm}
	\label{thm:5}
	\begin{enumerate}
		\item 
		\label{thm:5.1}
		$3$-dimensional closed, connected and orientable manifolds always admit smooth and PL GN multisections of degree $2$. They are Heegaard splittings.
		\item \label{thm:5.2}
		$3$-dimensional closed and connected manifolds always admit smooth and PL GN disk decomposition of degree $4$. They are shown from the theory of Heegaard splittings. For the $3$-dimensional closed, connected and non-orientable manifolds, Heegaard splittings are obtained. However, {\rm $1$-handlebodies in the non-orientable} case are defined as non-orientable variants. Hereafter and in our paper, we do not consider non-orientable cases unless otherwise stated.
		\item \label{thm:5.3}
		$4$-dimensional and $5$-dimensional smooth, closed, connected and orientable manifolds always admit smooth and PL GN multisections of degree $3$. They also admit smooth and PL GN disk multisections of degree $6$.
		See \cite{gaykirby} for the $4$-dimensional case and the $5$-dimensional case is due to \cite{lambertcolemiller}
	
		\item \label{thm:5.4}
		{\rm (}$2k-1${\rm )}-dimensional {\rm PL}, closed, connected and orientable manifolds always admit PL GN multisections of degree $k$ for any integer $k \geq 1$. They also admit PL GN disk decompositions of degree $2k$. {\rm (}$2k${\rm )}-dimensional closed, connected and orientable manifolds always admit PL GN multisections of degree $k+1$ for any integer $k \geq 1$. They also admit PL GN disk decompositions of degree $2(k+1)$.
		This is due to \cite{rubinsteintillmann1,rubinsteintillmann2}.
	\end{enumerate} 
\end{Thm}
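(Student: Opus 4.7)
The plan is to handle each of the four parts by invoking an existing construction from the cited literature for the multisection assertion, then, where a disk decomposition is claimed, refining that multisection by bisecting each $1$-handlebody, and finally verifying the GN condition pointwise against the local model of Definition~\ref{def:5}~(\ref{def:5.1}).

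For Part~(\ref{thm:5.1}) I would quote the classical Heegaard splitting result from \cite{hempel}. The GN check is immediate: at a point $p$ of the Heegaard surface a small embedded copy of $D^3$ meets the two handlebodies as $D^3 \cap {\mathbb{R}}^3_{0,\leq}$ and $D^3 \cap {\mathbb{R}}^3_1$, which is exactly the standard simple multisection of $D^3$ of degree $2$. For Part~(\ref{thm:5.2}) I would refine this Heegaard splitting: inside each $1$-handlebody $H_i$ choose a complete system of compressing disks reducing $H_i$ to a $3$-ball, then cut that $3$-ball along one further properly embedded separating disk. The result is four copies of $D^3$, and the GN check again reduces to inspecting the local picture; the delicate place is the $1$-dimensional central spine where the new separating disks meet the Heegaard surface, where transversality must be arranged to reproduce the standard simple multisection of $D^3$ of degree $4$.

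For Parts~(\ref{thm:5.3}) and~(\ref{thm:5.4}) I would cite \cite{gaykirby}, \cite{lambertcolemiller}, and \cite{rubinsteintillmann1,rubinsteintillmann2} for the existence of the multisections by $1$-handlebodies in the indicated dimensions, and then perform the same bisection step on each $1$-handlebody to pass from degree $k$ (or $k+1$) to degree $2k$ (resp.\ $2(k+1)$). In every dimension the mechanism is the same: a $1$-handlebody $H \cong D^m \,\#\, \bigl(\#_{\ell} (S^1 \times D^{m-1})\bigr)$ can be cut along a properly embedded codimension-$1$ disk system that first kills the $S^1 \times D^{m-1}$ summands and then separates the remaining ball into two copies of $D^m$.

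The main obstacle I expect is the coherent global choice of these bisecting disks so that the resulting decomposition is GN at every point of the central spine, not merely in the interior of each top-dimensional stratum. Globally one needs all $2k$ (resp.\ $2(k+1)$) boundary hypersurfaces to meet transversally in a single submanifold whose local model matches Definition~\ref{def:5}~(\ref{def:5.1}) exactly. My strategy would be to build the cutting system inductively: start from the GN multisection already present on $\partial H$ (inherited from the ambient multisection of $X$) and extend it into $H$ using the orientability hypothesis, which is what guarantees that each $1$-handlebody has the standard form above and therefore admits a coherent disk system compatible with the boundary structure.
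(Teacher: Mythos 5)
The paper offers no proof of this theorem at all: every assertion, including the GN property and the disk decompositions, is presented as a summary of known results and attributed wholesale to \cite{hempel}, \cite{gaykirby}, \cite{lambertcolemiller} and \cite{rubinsteintillmann1,rubinsteintillmann2}. Your proposal therefore does strictly more than the paper does. The citation step coincides, but the passage from a degree-$k$ multisection by $1$-handlebodies to a degree-$2k$ disk decomposition by bisecting each handlebody is your own argument; since its numerology ($2\to 4$, $3\to 6$, $k\to 2k$, $k+1\to 2(k+1)$) matches the statement exactly, it is very plausibly the mechanism the author has in mind, and you are right that the only genuinely delicate point is the coherence of the local models along the deepest stratum.

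Two places in your sketch need more care. First, ``cut along a complete compressing disk system and then one further separating disk'' does not automatically yield two disks: cutting $H\cong\natural_{\ell}(S^1\times D^{m-1})$ along the $\ell$ cocores produces a single ball $B$ carrying $2\ell$ distinguished boundary patches that are re-glued in pairs to recover $H$, and if your final separating disk leaves both members of some pair on the same side, that side re-glues to itself and is again a handlebody rather than a ball. You must choose the last disk so that every matched pair is split between the two sides; this is possible but is not a consequence of orientability alone, as you suggest. Second, and more seriously, the pointwise verification against Definition~\ref{def:5}~(\ref{def:5.1}) cannot succeed literally in the even-dimensional cases: admissibility (Definition~\ref{def:6}) forces all $2(k+1)$ pieces of the claimed disk decomposition of a $2k$-manifold to share a common point, while the standard simple multisection of $D^{2k}$ has degree at most $2k+1<2(k+1)$, so no point can have that local model. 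This is a tension in the theorem's own statement rather than a defect peculiar to your argument, but your plan as written --- ``verify the GN condition pointwise against the local model'' --- would stall exactly there, and an honest write-up must either reinterpret the GN condition as constraining only the pieces actually containing the given point or weaken the local model at the deepest stratum.
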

	We do not explain about GANM cases explicitly. We note some. 
	
	\cite{ogawa} is on multisections of $3$-dimensional closed, connected and orientable manifolds which are not GN. They are decomposed by so-called {\it multibranched surfaces}. \cite{asano,kobayashi} consider such cases in $4$-dimensional cases via generic smooth maps into the plane. 
	
	For example, we can see that in Definition \ref{def:10} (\ref{def:10.2}) and (\ref{def:10.4}), consider the case the admissible decomposition of $Y$ is of degree $2$. We immediately have a GANM decomposition of $X$ which is not GN in general.
\section{Lusternik-Schnirelmann category numbers.}	
	We define the so-called {\it Lusternik-Schnirelmann category number} of a topological space, theory related to which is presented systematically in \cite{cornealuptonopreatanre} for example.
	
	The {\it Lusternik-Schnirelmann category number} of a topological space $X$ is defined as ${\rm cat}(X):= {\rm cat}^{\prime}(X)-1$ where ${\rm cat}^{\prime}(X)$ is as follows: ${\rm cat}^{\prime}(X)$ is the minimal number in the set of all numbers of subsets whose inclusions into $X$ are null-homotopic and which cover $X$ if the number exists. If the number does not exist, then the number is defined as $+\infty$. 
	 
	 For a manifold $X$, ${\rm cat}(X) \leq \dim X$ holds.
	  If we change subsets whose inclusions into $X$ are null-homotopic into ones which are homeomorphic to disks and add suitable conditions on configurations of the disks in $X$ for example, then such a number minus $1$ gives an upper bound of ${\rm cat}(X)$.
	  Numbers defined in such ways are in considerable cases smaller than or equal to the dimensions of the manifolds. 
	  See \cite{cavicchioli,luft} for example. 
	  
	  For a closed and connected manifold $X$, ${\rm cat}(X)=1$ if and only if $X$ is a sphere.
	  It is not so difficult to know ${\rm cat}(X)$ in the case where the dimension of $X$ is at most $2$. 
	  	  For explicit theory related to Morse functions and (smooth) manifolds, see also \cite{saeki3}. Related to Theorem \ref{thm:5}, for a $3$-dimensional closed and connected manifold $X$, ${\rm cat}(X) \leq 3$. Furthermore, \cite{gomezlarranagafgonzalezacuna} completely determines ${\rm cat}(X)$ for any $3$-dimensional closed, connected and orientable manifold $X$.
	  	  
	  	  Such numbers are in considerable cases studied by methods of homotopy theory and important mainly in related regions. In our paper, we present related observation by explicit smooth maps and give an explicit geometric and constructive study. 
\section{On Main Theorems.}	
\begin{Thm}[Main Theorem \ref{thm:1} again]
	\label{thm:6}
		Let $M_0$ be a smooth closed and connected manifold of dimension $m_0>0$ admitting an {\rm admissible smooth (PL) decomposition} of {\rm degree $k$} {\rm defined by a family $\{M_{0,j}\}_{j=1}^{k_0}$ of submanifolds}. 
	Let $M$ be the total space of a linear bundle over $M_0$ whose fiber is the $k$-dimensional unit sphere and which is reduced by degree $k^{\prime}$ with integers $k>0$ and $k^{\prime}>0$ satisfying $k-k^{\prime}>0$.
	
	Then, $M$ admits an admissible smooth {\rm (}resp. PL{\rm )} decomposition of degree $2k_0$.
	 Furthermore, this is as follows.
	 
	 \begin{enumerate}
	 	\item $M$ admits a special generic map $f:M \rightarrow N$ into some {\rm (}$m_0+k^{\prime}${\rm )}-dimensional connected and non-closed manifold $N$ whose image is a smoothly embedded manifold diffeomorphic to $M_0 \times D^{k^{\prime}}$.
	 	Here we decompose $D^{k^{\prime}}$ into two copies ${D^{k^{\prime}}}_1$ and ${D^{k^{\prime}}}_2$ of the unit disk $D^{k^{\prime}}$ glued via a diffeomorphism from a smoothly embedded copy of $D^{k^{\prime}-1}$ in the boundary $\partial {D^{k^{\prime}}}_1 \subset {D^{k^{\prime}}}_1$ onto a smoothly embedded copy of $D^{k^{\prime}-1}$ in the boundary $\partial {D^{k^{\prime}}}_2 \subset {D^{k^{\prime}}}_2$ to reconstruct $D^{k^{\prime}}${\rm :} in other words we consider a disk decomposition being a case of Proposition \ref{prop:3} for example. 
	 	\item The admissible decomposition is defined by the family $\{f^{-1}(M_{0,j} \times {D^{k^{\prime}}}_{1})\}_{j=1}^{k_0} \sqcup \{f^{-1}(M_{0,j} \times {D^{k^{\prime}}}_{2})\}_{j=1}^{k_0}$.
	 \end{enumerate}
	
	In the case where the given decomposition is a {\it GN} decomposition, the resulting decomposition is {\it GANM}. In the case where the given decomposition is {\it DGNANM}, so is the resulting one. In the case where the given decomposition is a multisection with the structure group of the given linear bundle being reduced to a so-called {\rm rotation group}, the resulting decomposition is also a multisection. In the case where the given decomposition is a {\it disk decomposition}, so is the resulting decomposition.
	\end{Thm}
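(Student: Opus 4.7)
The plan is to build the map $f$ first and then pull back a product decomposition. Since the sphere bundle is linear with structure group reduced by degree $k'$, we may write the fibre as $S^k \subset \mathbb{R}^{k+1} = \mathbb{R}^{k'} \oplus \mathbb{R}^{k+1-k'}$ with the structure group preserving the first summand. The canonical projection $\pi \colon S^k \to D^{k'}$, $(u,v) \mapsto u$, is a special generic map whose image is $D^{k'}$ and whose singular set is the equator $S^{k'-1} \subset S^k$. Because $\pi$ commutes with the reduced structure group $O(k+1-k')$ acting on $v$, it bundlifies to a global smooth map $f \colon M \to N$, where $N$ can be taken to be $M_0 \times \mathbb{R}^{k'}$ (of dimension $m_0+k'$) and the image is precisely the smoothly embedded copy of $M_0 \times D^{k'}$. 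Direct inspection of local charts shows $f$ is special generic and that the local structure around the singular set is exactly the one described in Proposition \ref{prop:1}.

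Next I would set up the decomposition of the base. Split $D^{k'}$ along a flat equatorial $D^{k'-1}$ into two half-disks ${D^{k'}}_1$ and ${D^{k'}}_2$. This two-piece disk decomposition is the simplest case of Proposition \ref{prop:3}. Taking products with the given decomposition $\{M_{0,j}\}_{j=1}^{k_0}$ of $M_0$ gives an admissible decomposition of $M_0 \times D^{k'}$ by the $2k_0$ submanifolds $\{M_{0,j} \times {D^{k'}}_i\}$. Now define the candidate decomposition of $M$ to be the pullback family $\{f^{-1}(M_{0,j} \times {D^{k'}}_i)\}$. Each piece is a $D^k$-bundle over $M_{0,j}$ (since $\pi^{-1}({D^{k'}}_i)$ is a closed hemisphere $D^k$ of $S^k$), so each piece is a top-dimensional compact connected submanifold of $M$. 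Disjointness of interiors follows from disjointness of interiors at the base; the global intersection $\bigcap_{j,i} f^{-1}(M_{0,j} \times {D^{k'}}_i) = f^{-1}((\bigcap_j M_{0,j}) \times D^{k'-1})$ is non-empty because $\bigcap_j M_{0,j}$ is non-empty by hypothesis. This verifies admissibility.

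For the preservation claims I would work pointwise. A point $p \in M$ with $f(p) = (x,y) \in M_0 \times D^{k'}$ has a neighborhood modelled on the product of a local chart at $x$ (inherited from the base decomposition) with the local structure of the hemisphere splitting at $y$. When $y$ lies in the interior of one ${D^{k'}}_i$ only the base structure contributes; when $y$ lies on the equator the two halves meet transversely and this is the source of multibranching in the GN $\Rightarrow$ GANM statement, producing precisely the standard almost simple multibranched model of Definition \ref{def:5} (\ref{def:5.2}). For the disk-decomposition claim, if $M_{0,j} = D^{m_0}$ the $S^k$-bundle restricts to a trivial bundle over $M_{0,j}$, so $f^{-1}(M_{0,j} \times {D^{k'}}_i) \cong D^{m_0} \times D^k \cong D^{m_0+k}$. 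For the multisection claim, when the structure group is a rotation group the hemisphere bundle $D^k \to f^{-1}(M_{0,j} \times {D^{k'}}_i) \to M_{0,j}$ is orientable; an orientable disk bundle over a $1$-handlebody is itself a $1$-handlebody in dimension $m_0+k$.

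The main obstacle is the DGNANM $\Rightarrow$ DGNANM step, because DGNANM is defined inductively through several clauses of Definition \ref{def:10}. My strategy is induction on the inductive depth of the given DGNANM decomposition of $M_0$, running parallel to the clauses of Definition \ref{def:10}: for the base cases (\ref{def:10.1}) I use the GN/GANM analysis above; for the gluing clauses (\ref{def:10.2}) and (\ref{def:10.4}) I exploit the fact that $f$ respects fibre products, so that pulling back through the $S^k$-bundle commutes with the gluings that define the $F_{Y_i}$ pieces and the polyhedron $P$, giving a new DGNANM presentation of the pulled-back decomposition with updated gluing manifolds obtained as $D^k$-bundles over the old ones; clauses (\ref{def:10.3}) and (\ref{def:10.5}) are transported in the same bundle-theoretic way, using that connected sums and transverse intersections in the base lift to connected sums and transverse intersections in the total space because of the reduced linear structure. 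The careful bookkeeping needed to match the polyhedral data of Definition \ref{def:10} after pulling back is the principal technical burden.
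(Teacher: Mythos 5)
Your proposal follows essentially the same route as the paper: both construct the special generic map $f$ by bundlifying the canonical projection $S^k \to D^{k'}$ over $M_0$ using the reduction of the structure group (the paper packages this via Proposition \ref{prop:2} with $\bar{f}_N$ the embedding of $M_0 \times D^{k'}$ into $M_0 \times \mathbb{R}^{k'}$), then pull back the product decomposition $\{M_{0,j} \times {D^{k'}}_i\}$, and both settle the disk/multisection cases by observing that the rotation-group hypothesis trivializes the hemisphere bundle over each $1$-handlebody piece and the GN/GANM/DGNANM cases by appealing to the local models and the inductive clauses of Definition \ref{def:10}. The level of detail in the DGNANM step is comparable to (in fact somewhat more explicit than) the paper's own sketch, so no substantive divergence to report.
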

\begin{proof}
	We construct a desired special generic map by applying Proposition \ref{prop:2} where $\bar{f}_N$ is a smooth embedding from $M_0 \times D^{k^{\prime}}$ into $N:=M_0 \times {\mathbb{R}}^{k^{\prime}}$.
	
	For the canonical projection of the unit sphere $S^k$ onto ${\mathbb{R}}^{k^{\prime}}$ as presented, we have a natural action of the orthogonal group of degree $k-k^{\prime}+1$ acting on the unit sphere preserving the values of the canonical projection. This canonical projection is, as presented, special generic and the image is the unit disk. We can see we have a desired special generic map. We can also see that we have a desired admissible smooth or PL decomposition.
	
	By considering Definition \ref{def:10} (\ref{def:10.2}) and the structures of the maps and the admissible decompositions for example, we have results on GN, GANM, and DGNANM decompositions.
	
	Note also that the decomposition given by the family $\{M_{0,j} \times {D^{k^{\prime}}}_1\}_{j=1}^{k_0} \sqcup \{M_{0,j} \times {D^{k^{\prime}}}_2\}_{j=1}^{k_0}$ is GANM (DGNANM) if the given admissible decomposition is GN and of degree $2$ (resp. DGNAMN). In the case of a GN decompotion of degree $2$, we have this from the definitions easily. For this, it is important to remember a short argument in the end of the third section. In the case of a DGNAMN decomposition, consider a double of the given manifold for example.
	
	We explain about results on disk decompositions and multisections. $M_{0,j}$ is a copy of the unit disk $D^{m_0}$ or a $1$-handlebody. $M_{0,j} \times {{D^{k^{\prime}}}_i}$ is represented as the product of a copy of the unit disk $D^{m_0}$ or a $1$-handlebody and a copy of the unit disk $D^{k^{\prime}}$. The intersection of this set with the boundary $\partial f(M)$ of the image $f(M)$ is regarded as the product of $M_{0,j}$ and a copy of the unit disk $D^{k^{\prime}-1}$ embedded smoothly in the boundary  $\partial {D^{k^{\prime}}}_i \subset {D^{k^{\prime}}}_i$ here.

	Here, note that 1-handlebodies are (simple) homotopy equivalent to bouquets of finitely many circles. Note also that in the case of the multisection, the structure group of the given linear bundle is assumed to be the rotation group.

    This makes the restriction of the internal smooth bundle of $f$ to the intersection of $M_{0,j} \times {D^{k^{\prime}}}_i$ and the base space to be a trivial smooth bundle.
    This argument, Proposition \ref{prop:2} and the local structures of special generic maps complete the proof.
	
\end{proof}
\begin{Ex}
	\label{ex:3}
	Furthermore, we can obtain maps in Example \ref{ex:1} as maps for Theorem \ref{thm:6} in the case $l=1$.
	Special generic maps in Theorem \ref{thm:1} (Theorem \ref{thm:1} (\ref{thm:1.3}) with $l=1$) are regarded as maps for Theorem \ref{thm:6} in considerable cases .

\end{Ex}
\begin{Ex}
\label{ex:4}
		Linear bundles whose fibers are circles are classified via the 2nd integral cohomology group of the base space. In general they give a family of infinitely many total spaces of bundles	distinct spaces in which are not mutually homeomorphic.
		  
			We give another example. Finite iterations of taking linear bundles whose fibers are unit disks can help us to obtain families of infinitely many manifolds distinct manifolds in which are not mutually diffeomorphic. \cite{choimasudasuh} studies so-called {\it Bott manifolds} and their classifications. A {\it Bott manifold} is a manifold obtained by considering a finite iteration of considering linear bundles whose fibers are the $2$-dimensional unit sphere $S^2$ of an explicit class starting from a one-point set.
			
	Such cases can give examples for Theorem \ref{thm:6}.

	\end{Ex}

\begin{Thm}[Main Theorem \ref{thm:2} and a related new result]
	\label{thm:7}
		Let $m>n \geq 1$ be integers. 
		\begin{enumerate}
		
			\item \label{thm:7.1}
			Let $m>n \geq 1$ be integers. Let $M_L$ be an {\rm (}$m-n+1${\rm )}-dimensional smooth, compact and connected manifold.
		Assume that there exists an admissible smooth {\rm (}resp. PL{\rm )} decomposition defined by a family $\{M_{L,j}\}_{j=1}^k$ of submanifolds. Then a manifold $M$ admitting a round fold map $f:M \rightarrow {\mathbb{R}}^n$ having a globally trivial monodromy and having an axis manifold diffeomorphic {\rm (}resp. PL homeomorphic{\rm )} to $M_L$ admits an admissible smooth {\rm (}PL{\rm )} decomposition of degree $2k$. In addition in the case where the given admissible decomposition defined by the family $\{M_{L,j}\}_{j=1}^k$ of the submanifolds is a disk decomposition, the resulting admissible decomposition of degree $2k$ is also a disk decomposition. In the case where the given admissible decomposition defined by the family $\{M_{L,j}\}_{j=1}^k$ of the submanifolds is a multisection, the resulting admissible decomposition of degree $2k$ is a multisection.
		In addition, the given admissible decomposition is DGNANM, then it is also DGNANM outside a closed manifold $f^{-1}(0)$.
			\item \label{thm:7.2}
			Let $M_L$ be an {\rm (}$m-n+1${\rm )}-dimensional closed and connected manifold.
	Consider a manifold $M$ admitting a round fold map $f:M \rightarrow {\mathbb{R}}^n$ having a globally trivial monodromy and an axis manifold diffeomorphic {\rm (}PL homeomorohic{\rm )} to a manifold $M_{L,{\rm (}l{\rm )}}$. Furthermore assume that the following properties are enjoyed.
	\begin{enumerate}
		\item There exists a smooth {\rm (}resp. PL{\rm )} DGNANM decomposition of $M_L$ defined by a family $\{M_{L,j}\}_{j=1}^k$ of submanifolds. Then the intersection ${\bigcap}_{j=1}^k M_{L,j}$ contains a {\rm (}discrete{\rm )} subset of exactly $l>0$ points.
		\item $M_{L,{\rm (}l{\rm )}}$ is a smooth manifold obtained by removing the interiors of exactly $l$ smoothly and disjointly embedded copies of the unit disk $D^{m-n+1}$ in $M_L$. 
	\end{enumerate}
	Then $M$ admits a smooth {\rm (}resp. PL{\rm )} DGNANM decomposition outside $f^{-1}(0)$ of degree $2k$. In addition in the case where the given DGNANM decomposition defined by the family $\{M_{L,j}\}_{j=1}^k$ is a disk decomposition, the admissible decomposition of degree $2k$ is a disk decomposition. In the case where the given DGNANM decomposition defined by the family $\{M_{L,j}\}_{j=1}^k$ is a multisection, the admissible decomposition of degree $2k$ is a multisection.
\end{enumerate}
\end{Thm}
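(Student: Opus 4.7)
My plan is to lift the given admissible decomposition $\{M_{L,j}\}_{j=1}^k$ of the axis manifold along the round fold map, using the same doubling trick employed in the proof of Theorem \ref{thm:6}. The role played there by the sphere fiber of the linear bundle is now played by $S^{n-1}$ (or $S^{0}=\{\pm 1\}$ when $n=1$), the fiber of the global bundle associated with the round fold map.

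First I will unpack the structure: by Definition \ref{def:4} and the explanatory paragraph preceding Main Theorem \ref{mthm:2}, the globally trivial monodromy produces, for $n\geq 2$, an identification
\begin{equation*}
f^{-1}(\mathbb{R}^n\setminus B)\cong M_L\times S^{n-1}
\end{equation*}
(with $B$ a small open ball around $0\in\mathbb{R}^n$) compatible with $f$ and the canonical projection in polar coordinates, while the inner piece $f^{-1}(B)$ is diffeomorphic to $D^n\times F$ for a closed manifold $F$; in case (\ref{thm:7.1}) $F=\partial M_L$, and in case (\ref{thm:7.2}) $F$ is the disjoint union of the $l$ closed $(m-n)$-dimensional boundary submanifolds arising from the $l$ removed disks. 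For $n=1$ this reduces to $M=M_L\cup_\phi M_L'$ glued along $\partial M_L$.

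Next I will split the target. Take a hyperplane through $0\in\mathbb{R}^n$ dividing $\mathbb{R}^n$ into closed half-spaces $H_1,H_2$, and correspondingly split $S^{n-1}$ into hemispheres $S^{n-1}_i:=H_i\cap S^{n-1}$ and $D^n$ into half-disks $D^n_i:=H_i\cap D^n$. Let $\Pi:f^{-1}(\mathbb{R}^n\setminus B)\rightarrow M_L$ be the projection to the first factor in the identification above. Define the $2k$ candidate submanifolds by
\begin{equation*}
M_{i,j}:=\bigl(\Pi^{-1}(M_{L,j})\cap f^{-1}(H_i)\bigr)\cup\bigl((\partial M_{L,j}\cap\partial M_L)\times D^n_i\bigr),\qquad i=1,2,\ 1\leq j\leq k,
\end{equation*}
viewed inside $M=(M_L\times S^{n-1})\cup_{\partial M_L\times S^{n-1}}(D^n\times F)$. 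Interior disjointness follows from disjointness of the interiors of the $M_{L,j}$'s and of $H_1,H_2$, and the total intersection $\bigcap_{i,j}M_{i,j}$ contains the lift of $\bigcap_j M_{L,j}$ crossed with the equatorial $S^{n-2}\subset S^{n-1}$, which is non-empty by the admissibility hypothesis on $\{M_{L,j}\}$; hence an admissible decomposition of degree $2k$ in both (\ref{thm:7.1}) and (\ref{thm:7.2}). The preserved-class claims I will then dispatch case by case as in the proof of Theorem \ref{thm:6}: when each $M_{L,j}$ is a disk (resp.\ a $1$-handlebody), the piece $M_{L,j}\times D^n_i$ is again a disk (resp.\ a $1$-handlebody after smoothing corners) and the globally trivial structure of $f$ over it carries this property to $M_{i,j}$; the DGNANM clause I will derive by invoking Definition \ref{def:10} (\ref{def:10.2}) with $F_{Y_1}=D^n_1$, $F_{Y_2}=D^n_2$ playing the roles of the two boundary collars and the axis-manifold decomposition playing the role of the input $Y$, using Proposition \ref{prop:3} to seed the induction when the input is GN or GANM.

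The main obstacle, and the reason the DGNANM conclusion is always phrased ``outside $f^{-1}(0)$'', is precisely the behavior on the inner region $f^{-1}(B)$: the axis-manifold decomposition does not canonically extend across $D^n\times F$, so the local model of the decomposition there is not of DGNANM type and must be excised by a regular neighborhood $N(f^{-1}(0))$ before Definition \ref{def:11} can apply. Aligning the recursive gluing of Definition \ref{def:10} with the product structure on the complement, and in particular verifying that the equatorial codimension-one piece $\Pi^{-1}(\bigcup_j\partial M_{L,j})\cap f^{-1}(\text{equatorial hyperplane})$ realizes the polyhedron $P$ appearing in Definition \ref{def:10} (\ref{def:10.2}), is the technical heart of the argument and I expect it to be the most delicate step.
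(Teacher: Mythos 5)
Your proposal is correct and follows essentially the same route as the paper: the paper likewise splits the target along a hyperplane through the origin into two half-spaces, identifies each preimage with $M_L\times D^{n-1}$ (the product of the Morse function on the axis manifold and the identity on $D^{n-1}$), takes the $2k$ pieces $M_{L,j}\times D^{n-1}$, and handles the DGNANM clause by pushing the decomposition off the central preimage $f^{-1}(0)$ exactly because, as you note, the decomposition does not extend canonically over $f^{-1}(B)\cong D^n\times F$. The only cosmetic difference is that the paper phrases the DGNANM bookkeeping via Definition \ref{def:10} (\ref{def:10.4}) with $Y:=M_L$, $F_{Y_i}=D^{n-1}$, $E_Y=D^n$ (for the existence statement of Theorem \ref{thm:8}) and an explicit isotopy into the complement of the central disk, where you cite Definition \ref{def:10} (\ref{def:10.2}); this does not change the substance.
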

	\begin{proof}[A proof of Main Theorem \ref{thm:2} or Theorem \ref{thm:7} {\rm (}\ref{thm:7.1}{\rm )} and Theorem \ref{thm:8}] 
		In the case $n=1$, we consider a family of submanifolds consisting of exactly $l$ manifolds each of which is diffeomorphic to the corresponding submanifold in the family $\{M_{L,j}\}_{j=1}^l$ and another $l$ manifolds where each of the latter $l$ manifolds is diffeomorphic to the corresponding submanifold in the family $\{M_{L,j}\}_{j=1}^l$ as before.
		 
		We concentrate on the case $n \geq 2$. For a round fold map into ${\mathbb{R}}^n$ here, we consider a natural construction, called {\it spinning construction} in \cite{kitazawa0.2} first. More precisely, we consider the product map of a Morse function on $M_L$ and the identity map on the unit sphere $S^{n-1}$. We embed the image smoothly and suitably into ${\mathbb{R}}^n$. We glue the projection of a trivial bundle over a copy of the unit disk $D^n$ in a suitable way by a suitable bundle isomorphism between the naturally obtained trivial smooth bundles. 
		
		$M_L \times D^n$ has a natural admissible decomposition defined by the family $\{M_{L,j} \times D^{n-1}\}_{j=1}^k$.
		In a situation like one in Definition \ref{def:10} (\ref{def:10.4}), we put $X:=M$, $Y:=M_L$, $F_{Y_i}=D^{n-1}$ and $E_Y=D^n$ for example and this completes the proof of Theorem \ref{thm:8}, presented later. 
		For example, we glue these products by the product of two diffeomorphisms (resp. piecewise smooth homeomorphisms) where the diffeomorphism (resp. piecewise smooth homeomorphism) between the base spaces is one for the identification to obtain the Euclidean space ${\mathbb{R}}^n$ of the target of our desired round fold map.
		However, in general, we cannot argue in such a way and admissible decompositions may not be DGNANM.
		
		As in FIGURE \ref{fig:1}, the map $f$ and the manifold of the domain are divided into two copies. The copy of the manifold is diffeomorphic to $M_L \times D^{n-1}$ and the copy of the map there is regarded as the product map of the Morse function on $M_L$ and the identity map on the unit disk $D^{n-1}$.
		
		\begin{figure}
			
			\includegraphics[width=30mm,height=30mm]{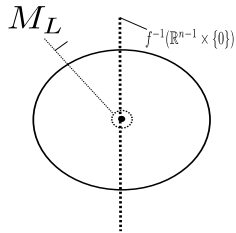}
			\caption{The image of a round fold map $f$ and the preimages of several submanifolds depicted by dotted straight segments): we identify as ${\mathbb{R}}^{n-1} \times \{0\} \subset {\mathbb{R}}^{n-1} \times \mathbb{R}={\mathbb{R}}^n$ in the canonical way for example.}
			\label{fig:1}
		\end{figure}
		
		We can complete the proof in the case where the decompositions may not be DGNANM.
		
		We can push the copies of the manifold and the copies of the (product) map into the complements of the central disk and the preimage by using diffeomorphisms or a smooth isotopy such that at each value of the space of the parameter, represented as the closed interval $[0,1]$, we have a smooth diffeomorphism. Due to this, we can complete the proof in the case where the given admissible decomposition is DGNANM.
		
		We can show statements on disk decompostions and multisections easily.
		
		This completes the proof.
		\end{proof}
	
	We present an example for Main Theorem \ref{thm:2}. This is not presented in the second section. This is not understood via Theorems \ref{thm:2}, \ref{thm:3} or \ref{thm:4} or Example \ref{ex:2}. ${\mathbb{C}P}^k$ denotes the {\it $k$-dimensional complex projective space}. This is a $k$-dimensional complex manifold and $2k$-dimensional closed and simply-connected manifold. For fundamental notions from algebraic topology such as (integral) cohomology rings and cup products of elements of them, see \cite{hatcher} for example. 
	\begin{Ex}
		\label{ex:5}
		According to \cite{kitazawa0.6}, $7$-dimensional smooth, spin and simply-connected manifolds whose integral cohomology rings are isomorphic to that of ${\mathbb{C}P}^2 \times S^3$ always admit  round fold maps into ${\mathbb{R}}^4$ having globally trivial monodromies and axis manifolds diffeomorphic to a manifold obtained by removing the interior of a copy of the unit disk $D^4$ smoothly embedded in the interior of $S^2 \times D^2$. This axis manifold admits a smooth GN disk decomposition of degree $2$.
		For this, first we consider $S^2 \times D^2$ as a manifold represented as one obtained by gluing two copies of $D^2 \times D^2$ by the product map of a diffeomorphism on $\partial D^2$ and the identity map on $D^2$. After that we can remove the interior of a small smoothly embedded copy of the $4$-diensional unit disk $D^4$ as in Proposition \ref{prop:4}. Note that for these operations we consider in the smooth category. We can see that the $7$-dimensional manifold admits a smooth DGNANM disk decomposition of degree $4$ outside $f^{-1}(0)$ where $f:M \rightarrow {\mathbb{R}}^n$ denotes a round fold map for our case. 
		As presented in the end of the third section, this is not {\it GN outside $f^{-1}(0)$} and this is {\it GANM outside $f^{-1}(0)$} where we can define such new notions as specific cases of DGNANM decompositions outside compact and connected submanifold with no boudanry.
		
		According to Wang's classification of such $7$-dimensional manifolds in \cite{wang}, all such manifolds admit such round fold maps. Moreover, without applying the classification, \cite{kitazawa0.6} obtains infinitely many maps and the family of the manifolds distinct manifolds in which are not mutually homeomorphic.
		
		According to fundamental methods in the theory of Lusternik-Schnirelmann cateogy numbers, for each manifold $M$ of such $7$-dimensional manifolds here, we have ${\rm cat}(M)=3$. We apply the fact that we have three elements whose degrees are positive and whose cup products are not the zero element in the integral cohomology ring. 
	\end{Ex}
	We can prove Theorem \ref{thm:7} {\rm (}\ref{thm:7.2} in a similar way and present important ingredients of the proof only.

	\begin{proof}[Important ingredients of a proof of Theorem \ref{thm:7} {\rm (}\ref{thm:7.2}{\rm )}]
		We can prove in a way similar to the proof of Main Theorem \ref{mthm:2}. For $M_{L,(l)}$ and its suitable admissible decomposition, we remove the interiors of $l$ copies of the unit disk $D^{\dim M_{L,(l)}}$ smoothly and disjointly embedded in $M_L$ and their given admissible decompositions respecting Proposition \ref{prop:4}. This is another main ingredient of the proof.
	\end{proof}
For example, Theorem \ref{thm:2} presents some simplest examples.
In the case $\Sigma$ is an exotic sphere there, we should discuss the problem in the PL category. 

We have the following by considering our proof of Theorem \ref{thm:7} and it is shown there.

\begin{Thm}
	\label{thm:8}
Let $M_L$ be an {\rm (}$m-n+1${\rm )}-dimensional smooth, compact and connected manifold. Assume that there exists a smooth {\rm (}resp. PL{\rm )} DGNANM decomposition of it defined by a family $\{M_{L,j}\}_{j=1}^k$ of submanifolds. Then there exists a manifold $M$ admitting a round fold map into ${\mathbb{R}}^n$ having a globally trivial monodromy and an axis manifold diffeomorphic {\rm (}resp. PL homeomorphic{\rm )} to $M_L$ and admitting a smooth {\rm (}resp. PL{\rm )} DGNANM decomposition of degree $2k$. In addition, in the case where the given DGNANM decomposition defined by the family $\{M_{L,j}\}_{j=1}^k$ is a disk decomposition, the resulting DGNANM decomposition of degree $2k$ is a disk decomposition. In the case where the given DGNANM decomposition defined by the family $\{M_{L,j}\}_{j=1}^k$ is a multisection, the resulting DGNANM decomposition of degree $2k$ is a multisection.
\end{Thm}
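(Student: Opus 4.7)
The plan is to leverage the spinning construction already invoked in the proof of Theorem \ref{thm:7}, but now using it as a \emph{synthesis} rather than an \emph{analysis} tool: given $M_L$ together with its DGNANM decomposition, I would build both the manifold $M$ and the round fold map $f \colon M \to \mathbb{R}^n$ simultaneously from scratch, and then read off the induced decomposition of degree $2k$ directly from the product structure.

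First I would pick a Morse function $\mu \colon M_L \to \mathbb{R}$ and form the product map $\mu \times \mathrm{id}_{S^{n-1}} \colon M_L \times S^{n-1} \to \mathbb{R} \times S^{n-1}$. Embedding $\mathbb{R} \times S^{n-1}$ into $\mathbb{R}^n \setminus \{0\}$ via the canonical polar identification (after shifting $\mu$ so its values sit in a positive interval), this gives the ``outer'' half of a round fold map, defined on the total space of a trivial $M_L$-bundle over an annular region. To close up the central hole I would attach the projection $M_L \times D^n \to D^n$ along the boundary $M_L \times S^{n-1}$ by the identity bundle isomorphism. The resulting map $f \colon M \to \mathbb{R}^n$ is a round fold map: its singular image is a union of concentric spheres, one for each critical value of $\mu$, its global bundle is trivial by construction, and its axis manifold is diffeomorphic (resp. PL homeomorphic) to $M_L$.

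Next I would write down the decomposition. The outer half, identified with $M_L \times D^{n-1}$ via one of the two hemispheres of $S^{n-1}$, inherits the admissible decomposition $\{M_{L,j} \times D^{n-1}\}_{j=1}^k$; the inner half, identified with $M_L \times D^{n-1}$ via the other hemisphere, inherits an analogous copy. Concatenating gives a family of $2k$ codimension-$0$ submanifolds of $M$ whose pairwise interiors are disjoint and whose total intersection contains $({\bigcap}_{j=1}^k M_{L,j}) \times S^{n-2}$, hence is non-empty. This is the required admissible decomposition of degree $2k$. When the $M_{L,j}$ are unit disks (resp. $1$-handlebodies), so are the products $M_{L,j} \times D^{n-1}$, which proves the disk-decomposition and multisection clauses.

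The hard part, and the point that justifies phrasing the conclusion in the DGNANM language, is verifying the DGNANM condition for the constructed decomposition. Following the author's hint after the proof of Theorem \ref{thm:7}, I would set $X := M$, $Y := M_L$, $F_{Y_1} = F_{Y_2} := D^{n-1}$ (the two closed hemispheres of $S^{n-1}$ glued along the equator $S^{n-2} = \partial D^{n-1}$), $E_Y := D^n$, and take $P$ to be the central disk $f^{-1}(D^n)$ together with the part of ${\bigcup}_{j} \partial(M_{L,j} \times D^{n-1})$ lying over the equator. One then checks the five STEPs of Definition \ref{def:10} (\ref{def:10.4}) in turn: STEP 10-4-1 supplies the three polyhedra; STEP 10-4-2 glues them into $P_{F,Y,\mathrm{a,b,c}}$; STEPs 10-4-3 and 10-4-4 realise it inside $\partial E_Y \times Y = S^{n-1} \times M_L$ and build $P_{E,Y}$ from the complementary piece; and STEP 10-4-5 reassembles $M$ by gluing the trivial bundles $\partial E_Y \times Y$ and $E_Y \times \partial Y = D^n \times \partial M_L$ via the identity bundle isomorphism. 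Because the hypothesis supplies a DGNANM decomposition of $M_L$ (of strictly lower dimension in the genuinely recursive situations), the inductive clause of Definition \ref{def:10} applies, and the decomposition of $M$ falls under case (\ref{def:10.4}), completing the proof.
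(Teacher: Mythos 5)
Your proposal is correct and follows essentially the same route as the paper: the paper also builds the round fold map by the spinning construction, splits $M$ into two copies of $M_L\times D^{n-1}$ carrying the families $\{M_{L,j}\times D^{n-1}\}_{j=1}^k$, and verifies the DGNANM property via Definition \ref{def:10} (\ref{def:10.4}) with exactly the assignments $X:=M$, $Y:=M_L$, $F_{Y_i}=D^{n-1}$, $E_Y=D^n$. The only point you gloss over is the degenerate case $n=1$ (where the polar/spinning picture does not literally apply and one instead doubles a Morse function), which the paper likewise dispatches in a single sentence.
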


\section{Conclusion and future problems.}
Consider using our Main Theorems or Theorems \ref{thm:6} and \ref{thm:7}, Theorem \ref{thm:8} and connected sums as in Definition \ref{def:10} (\ref{def:10.5}) or Example \ref{ex:6}, which is presented here, for example, inductively. We have various closed and connected manifolds, special generic maps or round fold maps and their admissible decompositions (which are DGNANM outside some compact submanifolds with no boundaries) compatible with the fold maps in some senses.

As an example, we explain about construction via using connected sums.

\begin{Ex}
	\label{ex:6}
	Consider resulting special generic maps in Main Theorem \ref{mthm:1} or Theorem \ref{thm:6} where the admissible decompositions are DGNANM.
	
	We can consider a special generic map on a manifold represented as a connected sum of the given two manifolds whose image is a manifold represented as a boundary connected sum of the original two images into some connected non-closed manifold with no boundary. Note that here connected sums and boundary connected sums are taken in the smooth category.
	
	In some suitable specific case, for each special generic map. we can choose a copy of the unit disk which is regarded as a regular neighborhood of a point which is in the boundary and whose dimension is same as that of the image satisfying the following conditions.
	
	\begin{enumerate}
		\item These copies are also for Proposition \ref{prop:4}.
		 \item We can also do so that the preimages are also for Proposition \ref{prop:4}.
		 \item We obtain a situation as in Definition \ref{def:10} (\ref{def:10.5}).
		 \end{enumerate}
	 
	 For this, remember arguments on DGNANM decompositions of the images of special generic maps and other arguments in the proof of Theorem \ref{thm:6} for example.

	One of most trivial cases is given by considering two copies of a special generic map as in Theorem \ref{thm:6} where the admissible decompositions are DGNANM.
	
	We also have a DGNANM decomposition of the resulting manifold of the domain defined by a family of submanifolds each of which is the preimage of the preimage of a submanifold in the canonically obtained family defining a natural DGNANM decomposition of the manifold of the image. 	
\end{Ex}

	We close our paper by several problems. The following is regarded as a reminder about Problems \ref{prob:1} and \ref{prob:2}.
	\begin{Prob}
		Can we formulate more meaningful and better classes of decompositions (via generic smooth maps whose codimensions are negative)? Can we find nice examples for our classes of decompositions more? 
	\end{Prob}
The following are more explicit problems.
\begin{Prob}
	In some theorems and arguments in our paper, we consider admissible disk decompsoitions of copies of unit disks such that by considering the restrictions to the boundaries we have admissible disk decompositions of standard spheres. This is also a key ingredient in \cite{kitazawa8}.
		Can we find such admissible disk decompositions for general compact and connected manifolds explicitly? Can we apply such decompositions for our new construction of explicit admissible decompositions of explicit classes?
\end{Prob}
\begin{Prob}
	By applying our construction or the theory of Main Theorems or Theorems \ref{thm:6} and \ref{thm:7} and Theorem \ref{thm:8} for example as in the beginning of this section, construct and study explicit examples concretely and systematically. As an explicit strategy, it may be nice to start from well-known examples such as ones in Theorem \ref{thm:5}. 
\end{Prob}
\begin{Prob}
    Can we consider admissible decompositions for round fold maps which may not have globally trivial monodromies such as ones presented in Theorems \ref{thm:3} and \ref{thm:4} and Example \ref{ex:2}. Note that in \cite{kitazawasaeki1}, we encounter such round fold maps on $3$-dimensional closed, connected and orientable manifolds into ${\mathbb{R}}^2$.  
\end{Prob}

	\end{document}